\documentclass[12pt]{amsart}

\usepackage{amsthm,amsfonts,amssymb,graphicx,cite,mathrsfs,enumitem}

\allowdisplaybreaks \allowdisplaybreaks[4]
\newtheorem{mytheorem}{Theorem}

\newtheorem {example} {Example}

\newtheorem{lemma}{Lemma}[section]
\newtheorem{theorem}{Theorem}[section]
\newtheorem{remark}{Remark}[section]

\newcounter{saveeqn}%

\begin{document}
	\title[Jacobian conjecture in $\mathbb{R}^n$]
	{Global injectivity and real Jacobian conjecture \\
    in $\mathbb{R}^n$}
	
	\author[Hu, Llibre, Tang]{Xinhao Hu$^a$,  Jaume Llibre$^b$ and Yilei Tang$^a$}

	\address{$^a$ School of Mathematical Sciences, Shanghai Jiao Tong University, Shanghai, 200240, P.R. China} 
    \email{huxinhao@sjtu.edu.cn (X. Hu), mathtyl@sjtu.edu.cn (Y. Tang)}

	\address{$^b$ Departament de Matem\`{a}tiques, Universitat Aut\`{o}noma de Barcelona, 08193 Bellaterra, Barcelona, Catalonia, Spain \newline Reial Acad\`emia de Ci\`encies i Arts de Barcelona, La Rambla 115, 08002, Barcelona, Catalonia, Spain}
	\email{jaume.llibre@uab.cat (J. Llibre)}

		\begin{abstract}
		In this paper, we study the global injectivity of local diffeomorphisms in $\mathbb{R}^n$, extending the criterions of Braun and Llibre in [{\it Pacific J. Math.} {\bf 333} (2024) 1--15] from the planar radial case to arbitrary dimension. Then we extend the results of Llibre and Mello in [{\it Bull. Sci. Math.} {\bf 213} (2026) 103889] to arbitrary dimension, obtaining two necessary and sufficient conditions for the real Jacobian conjecture in $\mathbb{R}^n$ by qualitative theory of differential equations. Finally, using weighted Poincar\'e compactification in $\mathbb{R}^n$, we provide a sufficient condition for the real Jacobian conjecture.
	\end{abstract}
	
\subjclass[2020]{14R15, 37C25, 37C10}
	
	\keywords{Global injectivity, real Jacobian conjecture, qualitative theory, Poincar\'e compactification}
	\date{}
	\dedicatory{}
	
	\maketitle
	\section{Introduction and main results}
		\setcounter{equation}{0}
	Let $U$ and $V$ be open subsets of $\mathbb R^n$.  An important problem is to determine when a local diffeomorphism
	\[
	f:U\to V
	\]
	is in fact globally injective, equivalently, a global diffeomorphism onto
	its image.  The inverse function theorem gives local invertibility in a neighborhood near the point $x$ under the
	condition 
	\[
	J_f(x):=\det Df(x)\neq 0
	\]
	where $Df(x)$ denotes the Jacobian matrix of $f(x)$. This gives invertibility
	in a neighborhood of the point $x$, but the condition alone does not ensure
	the global injectivity of the map.
	Thus one natural question is to ask which additional geometric, topological, or
	dynamical assumptions can promote local invertibility to global injectivity (see, for example in \cite{Parthasarathy83,Xavier}). Even for polynomial maps, the problem remains far from trivial and it brings to the celebrated \textit{Jacobian conjecture} and \textit{real Jacobian conjecture}, which will be introduced in detail later on.
	
Recently, a new approach to this question in dimension two has been developed by Braun and Llibre \cite{Braun} using planar {\it parallel} vector fields (see \cite{Li,Neumann} for a precise definition). In short, a planar vector field is called parallel if it is topologically
	equivalent to one of the three {\it canonical vector fields}
	$(-x_2,x_1)$ in $\mathbb{R}^2\setminus\{0\}$, $(x_1,x_2)$ in $\mathbb{R}^2\setminus\{0\}$, and $(1,0)$ in $\mathbb{R}^2$,
	corresponding respectively to the {\it annular}, {\it radial}, and {\it strip} cases. Given a
	local diffeomorphism $f:U\to\mathbb R^2$ and a vector field $\mathcal X$ defined on
	$f(U)$, where $U$ is an open subset of $\mathbb R^2$, they considered the
	pullback vector field $Df(x)^{-1}\mathcal X(f(x))$ for $x\in U$.
One of the results in \cite{Braun} is stated as follows.
	\begin{theorem}[see \cite{Braun}]
		\label{Th BL}
	Let $U$ be a simply connected open subset of $\mathbb{R}^2$. Let
	$f=(f_1,f_2):U\rightarrow \mathbb{R}^2$ be a $\mathcal{C}^2$ local
	diffeomorphism, and let $\mathcal{X}:f(U)\rightarrow\mathbb{R}^2$ be a
	$\mathcal{C}^1$ vector field topologically equivalent to one of the canonical
	annular, radial, or strip vector fields, surrounding a singularity in the
	first two cases. Then the following two conditions are equivalent:
		\begin{enumerate}[leftmargin=*]
		\item[\rm(i)] The pullback vector field $Df(x)^{-1}\mathcal X(f(x))$ is topologically equivalent to the vector field $\mathcal{X}$.
		\item[\rm(ii)] The map $f$ is a global diffeomorphism onto $f(U)$.
	\end{enumerate}
	\end{theorem}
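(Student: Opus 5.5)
The plan is to prove the two implications separately. In both directions the basic tool is that $f$ conjugates the flows. Write $\mathcal Y(x)=Df(x)^{-1}\mathcal X(f(x))$ for the pullback. Then $Df(x)\mathcal Y(x)=\mathcal X(f(x))$, so $f$ maps each orbit of $\mathcal Y$ into an orbit of $\mathcal X$, preserving time: $f(\phi^{\mathcal Y}_t(x))=\phi^{\mathcal X}_t(f(x))$ as long as both sides are defined. Two further facts follow from $f$ being a local diffeomorphism. The singular points of $\mathcal Y$ are exactly $f^{-1}$ of the singular points of $\mathcal X$. Also, $f$ restricted to any orbit of $\mathcal Y$ is a local homeomorphism onto an orbit of $\mathcal X$.

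\emph{(ii)$\Rightarrow$(i).} If $f:U\to f(U)$ is a global diffeomorphism, the conjugacy relation says $f$ itself sends orbits of $\mathcal Y$ onto orbits of $\mathcal X$, preserving orientation of time. Hence $f$ is a topological equivalence between $\mathcal Y$ and $\mathcal X$, and this direction is immediate.

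\emph{(i)$\Rightarrow$(ii).} Assume $\mathcal Y$ is topologically equivalent to $\mathcal X$, and hence to the corresponding canonical field. I would first show that $f$ is injective on each orbit, and then that distinct orbits of $\mathcal Y$ go to distinct orbits of $\mathcal X$.

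For the first step, $f$ restricted to an orbit $\gamma$ of $\mathcal Y$ is a local homeomorphism into an orbit $\delta$ of $\mathcal X$ that respects time. The orbits are either lines or circles.
\begin{itemize}
\item In the strip and radial cases both $\gamma$ and $\delta$ are lines. A time-preserving map from a line to a line is injective, because the time parametrisation is strictly monotone along the orbit.
\item In the annular case both are periodic orbits. Then $f|_\gamma$ is a covering of $\delta$ of some degree $k$, with the period of $\gamma$ equal to $k$ times the period of $\delta$.
\end{itemize}

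For the second step I would use a transversal. In the canonical model there is an arc $\Sigma$ meeting every orbit exactly once: the positive $x_1$-half-axis in the annular and radial cases, and the $x_2$-axis in the strip case. Let $\Sigma'\subset U$ be the preimage of this arc under the equivalence for $\mathcal Y$. The arc $\Sigma'$ is a curve transversal to $\mathcal Y$ that meets every orbit once. Its image $f(\Sigma')$ is an immersed curve transversal to $\mathcal X$, since $f$ is a local diffeomorphism. To get injectivity across orbits, I would show that the induced map from the orbit space of $\mathcal Y$ (an interval) to the orbit space of $\mathcal X$ (an interval) is injective. This induced map is continuous and locally injective, because it is a local homeomorphism on flow boxes. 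A continuous locally injective map from an interval to an interval is strictly monotone, hence injective.

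In the radial case I also need that the unique singularity $p$ of $\mathcal Y$ is the only point mapped to the singularity $q$ of $\mathcal X$. This holds because every other point lies on an orbit that is sent into a regular orbit.

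The main obstacle is the annular case: I must exclude $k\ge 2$ and show that $f$ maps the central singularity injectively. My approach is to use that $k$ is locally constant in the orbit parameter, so it is constant. Then I compare with orbits shrinking to the singularity $p$ of $\mathcal Y$, which $f$ sends to the singularity $q$ of $\mathcal X$. Near $p$, $f$ is a diffeomorphism onto a neighbourhood of $q$, so small periodic orbits around $p$ map with degree $1$. Hence $k=1$ everywhere.

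Combining the two steps, $f$ is a bijection from $U$ onto its image. Since it is a local diffeomorphism, it is a global diffeomorphism onto $f(U)$. Simple connectivity of $U$ is used to ensure the pullback's phase portrait is the assumed one and that there are no holes. A final check that $f(U)$ equals the full domain of $\mathcal X$ would follow from the surjectivity of the orbit-space map. Its image is open, and it is closed because of the flow-box argument on the ends of orbits. If this part is delicate, I would retreat to proving injectivity, which is what (ii) requires.
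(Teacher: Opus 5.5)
Your strip and annular arguments are fine in outline. There the orbit spaces really are intervals ($\mathbb R$ and $(0,\infty)$), and a short transversal meets each orbit at most once. In the annular case you correctly use the local diffeomorphism at the centre to force $k=1$. In the strip case your route differs from the paper's Theorem~\ref{Th4}: under (i), your monotonicity argument shows that for $n=2$ each nonempty fibre $\mathscr F_c$ is a single orbit, so the connectedness hypothesis the paper needs in higher dimension comes for free in the plane.

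The gap is the radial case. The orbits of $(x_1,x_2)$ on $\mathbb R^2\setminus\{0\}$ are the open rays, so the positive $x_1$-half-axis is itself an orbit, not a transversal. A section meeting every orbit exactly once is a closed curve around the singularity, and the orbit space is the circle of directions $\mathbb S^1$, not an interval. Your induced map $\bar f$ is therefore a continuous locally injective map $\mathbb S^1\to\mathbb S^1$, i.e. a covering of some degree $k$, and the interval argument says nothing against $|k|\ge 2$. No argument that only looks at the flows on $U\setminus\{p\}$ and $f(U)\setminus\{q\}$ can close this. Indeed, $f(z)=z^2$ on $\mathbb C\setminus\{0\}$ pulls the radial field $z$ back to $z/2$, which is again radial, yet $\bar f(\theta)=2\theta$ and $f$ is two-to-one.

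The missing ingredient is the one you already used in the annular case. The paper only quotes this theorem, but it is exactly the argument of its proof of Theorem~\ref{Th1}, the $n$-dimensional radial case.
\begin{itemize}
\item Since $\mathcal Y$ is radial around $p$, every nonsingular orbit of $\mathcal Y$ has $\alpha$-limit $p$.
\item Take a neighbourhood $W$ of $p$ on which $f$ is a diffeomorphism. Shrink it so that $f(W)$ is the preimage of a small enough disc under the equivalence of $\mathcal X$ with the canonical field. Then every orbit of $\mathcal X$ meets $f(W)$ in exactly one initial segment.
\item Because $f|_W$ conjugates the local flows, every orbit of $\mathcal Y$ also meets $W$ in exactly one initial segment, and distinct orbits give disjoint initial segments.
\item Since $f|_W$ is injective, these are sent to disjoint initial segments in $f(W)$, hence to distinct orbits of $\mathcal X$.
\end{itemize}
Equivalently, a small closed section of $\mathcal Y$ around $p$ inside $W$ is mapped by $f$ homeomorphically onto a closed section of $\mathcal X$, so $\bar f$ is a homeomorphism. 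With this in place of your interval argument, your Step~1 and the fact $f^{-1}(q)=\{p\}$ complete the radial case.
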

 Let $U$ and $V$ be open subsets of $\mathbb R^n$. We say that two vector fields
$\mathcal Z:U\to\mathbb R^n$ and $\mathcal W:V\to\mathbb R^n$ are 
\emph{topologically equivalent} if there exists a homeomorphism
$h:U\to V$ carrying orbits of $\mathcal Z$ onto orbits of $\mathcal W$ and
preserving the orientation of the orbits.
	 Theorem \ref{Th BL} unifies
		several previous criteria for global injectivity. For example, it recovers
		the planar form of the Hadamard global invertibility theorem (see, for example, \cite{Deimling}). It also contains the dynamical results of the planar real
		Jacobian problem due to Gavrilov \cite{Gavrilov97} and Sabatini \cite{Sabatini}. Indeed, the pullback of canonical annular vector fields leads to a planar Hamiltonian vector field with Hamiltonian $H=(f_1^2+f_2^2)/2$, and a polynomial local diffeomorphism $f=(f_1,f_2)$ of the plane is injective precisely when the corresponding Hamiltonian vector field has a global center. More recently, Domingues, Llibre and Mello \cite{DominguesLlibreMello26} used the strip case in their Theorem~1 to obtain a sufficient condition for the real Jacobian conjecture in $\mathbb R^2$ using the topological structure of the polynomial Hamiltonian vector field $(\partial_{x_2}f_2,-\partial_{x_1}f_2)$, which is exactly the pullback of the canonical strip vector field. Moreover,
		Llibre and Mello \cite{LlibreMello} used the radial case in their Theorem~1 to obtain a necessary and
		sufficient condition for the real Jacobian conjecture in $\mathbb R^2$ in
		terms of the boundedness of the polynomial vector field $(f_1 \partial_{x_2} f_2-f_2 \partial_{x_2} f_1,\,-f_1 \partial_{x_1} f_2+f_2 \partial_{x_1} f_1)$,
		which is exactly the pullback of the canonical radial vector field.
		
		One purpose of this paper is to extend the radial case of Theorem~1 in \cite{Braun} from the plane to $\mathbb R^n$.  The {\it radial} case has
	a natural higher-dimensional form, namely the  canonical radial vector field in $\mathbb{R}^n$:
	\begin{equation}\label{canonical-radial}
	\mathcal X_r(x)=(x_1,\ldots,x_n),\qquad x\in\mathbb R^n.
\end{equation}
Thus it is possible to formulate and prove a higher-dimensional criterion for global injectivity in terms of the pullback of a
	radial vector field (i.e., a vector field that is topologically equivalent to \eqref{canonical-radial}).  However, the annular and strip cases involve
	additional difficulties in higher dimensions.  The planar annular case is  related to closed orbits and global centers, while the
	strip case is closely related to the structure of one-dimensional foliations
	and the connectedness of level curves.  These are essential
	two-dimensional features, and their direct higher-dimensional analogues leave some gaps.  We shall return to this point in Section \ref{S4}.
	
	We now introduce some definitions and notations used in this paper.
	Let $U\subset\mathbb R^n$ be an open set. For $x_0\in U$, we say that a
	vector field $\mathcal X:U\to\mathbb R^n$ is \emph{radial surrounding
	$x_0$} if $\mathcal X|_{U\setminus\{x_0\}}$ is topologically equivalent to
	$\mathcal X_r$ on $\mathbb R^n\setminus\{0\}$.
	Let $f=(f_1,\ldots,f_n):U\to\mathbb R^n$ be a $\mathcal{C}^k$ local diffeomorphism,
	$k\geq 1$, and let $\mathcal X: f(U)\to\mathbb R^n$ be a $\mathcal{C}^1$ vector field.
	The {\it pullback of $\mathcal X$ by $f$} is defined by	\begin{equation}\label{pullback}
		\mathcal Y_{\mathcal X}(x)
		=
		Df(x)^{-1}\mathcal X(f(x)),\quad x\in\mathbb R^n.
	\end{equation}
	In particular, the pullback of the canonical radial vector field
	\eqref{canonical-radial} is
	\begin{equation}\label{radial-pullback}
		\mathcal Y_r(x)=Df(x)^{-1}f(x).
	\end{equation}
	In Theorem \ref{Th1} and its proof, $\mathcal Y_r$ denotes the pullback
	$\mathcal Y_{\mathcal X}$ of the radial vector field $\mathcal X$.
Then one of our main results is stated as follows.
	\begin{mytheorem}\label{Th1}
		Let $U$ be a simply connected open subset of $\mathbb R^n$.  Let
		$f=(f_1,\ldots,f_n):U\to\mathbb R^n$ be a $\mathcal{C}^2$ local diffeomorphism, and
		let $\mathcal X:f(U)\to\mathbb R^n$ be a $\mathcal{C}^1$ vector field which is radial
		surrounding a singularity.  Then the following two statements are equivalent:
		\begin{enumerate}[leftmargin=*]
			\item[\rm(i)] The pullback vector field $\mathcal Y_r$ is
			topologically equivalent to $\mathcal X$.
			\item[\rm(ii)] The map $f$ is a global diffeomorphism onto $f(U)$.
		\end{enumerate}
	\end{mytheorem}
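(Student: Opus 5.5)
The key observation is that $\mathcal Y_{\mathcal X}$ and $\mathcal X$ are $f$-related. Since $Df(x)\mathcal Y_{\mathcal X}(x)=\mathcal X(f(x))$, the local diffeomorphism $f$ maps orbits of $\mathcal Y_{\mathcal X}$ onto orbits of $\mathcal X$ and preserves time. Precisely, if $\varphi_t$ is the flow of $\mathcal Y_{\mathcal X}$ and $\psi_t$ that of $\mathcal X$, then $f(\varphi_t(x))=\psi_t(f(x))$ on the maximal interval of $\varphi_t(x)$. That interval is contained in the maximal interval of $\psi_t(f(x))$, but could be shorter. Throughout I work with the pullback of $\mathcal X$ itself, denoted $\mathcal Y_r$ as the paper indicates. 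Let $p\in f(U)$ be the singularity of $\mathcal X$. The singularities of $\mathcal Y_r$ are exactly $f^{-1}(p)$, because $Df(x)$ is invertible.

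\textbf{(ii)$\Rightarrow$(i).} If $f:U\to f(U)$ is a diffeomorphism, then $\mathcal Y_r=f^*\mathcal X$ is smoothly conjugate to $\mathcal X$ via $h=f$. In particular it is topologically equivalent to $\mathcal X$, and this direction is immediate.

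\textbf{(i)$\Rightarrow$(ii).} Let $h:U\to f(U)$ be a topological equivalence from $\mathcal Y_r$ to $\mathcal X$.

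First, $\mathcal X$ has exactly one singularity. Its restriction to $f(U)\setminus\{p\}$ is equivalent to $\mathcal X_r$ on $\mathbb R^n\setminus\{0\}$, which has none. Hence $\mathcal Y_r$ has exactly one singularity $q=h^{-1}(p)$, and so $f^{-1}(p)=\{q\}$.

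Next, the orbit structure of $\mathcal X$ on $f(U)\setminus\{p\}$ is transported to $\mathcal Y_r$ on $U\setminus\{q\}$. Every orbit of $\mathcal X_r$ is a ray $\{e^t v\}$. It tends to $0$ in backward time and leaves every compact set in forward time. Moreover, the orbit space is the sphere $S^{n-1}$, and each small sphere about the singularity is a global cross-section. Via $h$, every orbit of $\mathcal Y_r$ in $U\setminus\{q\}$ has $\alpha$-limit $q$, and the orbits are parametrized by $S^{n-1}$. The same holds for $\mathcal X$ near $p$.

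Now take a small ball $B$ around $q$ on which $f$ is a diffeomorphism onto a neighbourhood $f(B)$ of $p$. Since $f$ intertwines the flows and $f^{-1}(p)=\{q\}$, $f$ sends the orbit of $\mathcal Y_r$ through $x\in B\setminus\{q\}$ into the orbit of $\mathcal X$ through $f(x)$, as the same time-parametrized curve. Injectivity along a single orbit follows from time preservation, since non-singular orbits of radial fields are not periodic. Every point of $U\setminus\{q\}$ lies on an orbit that enters $B$, because its $\alpha$-limit is $q$. Every orbit of $\mathcal X$ enters $f(B)$ for the same reason. Combining these, $f$ induces a map on orbit spaces that agrees with the local diffeomorphism $f|_B$ on small cross-sections. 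That induced map is therefore a bijection $S^{n-1}\to S^{n-1}$.

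Hence distinct orbits of $\mathcal Y_r$ go to distinct orbits of $\mathcal X$, and $f$ is injective on each orbit. This gives global injectivity. Surjectivity onto $f(U)$ is automatic.

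\textbf{Main obstacle.} The delicate step is the injectivity along orbits, together with the claim that two points on different $\mathcal Y_r$-orbits cannot have the same image. Suppose $f(x)=f(y)$. Flow both points backward in time by the same amount. By the intertwining relation their images stay equal, while the points themselves approach $q$, because their $\alpha$-limit is $q$. The intertwining requires the backward flows of $x$ and $y$ to exist for all negative time. This should hold because $q$ is a hyperbolic-like repeller in the topological sense. Since the maximal intervals may differ from those of $\mathcal X$, I would argue as follows. The orbit of $x$ converges to $q$ as $t\to-\infty$. Comparing with $\psi_t(f(x))\to p$ then forces $\varphi_t(x)$ to exist for all $t\le 0$: inside the compact $\overline B$ the solution cannot blow up. Eventually both $\varphi_t(x)$ and $\varphi_t(y)$ lie in $B$ with equal images, so they coincide, and therefore $x=y$.

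The simple connectivity of $U$ and the $\mathcal C^2$ hypothesis are only needed to ensure that $\mathcal Y_r$ is $\mathcal C^1$, so that its flow is well defined.
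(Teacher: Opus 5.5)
Your proposal is correct and follows essentially the paper's route. Both arguments use the uniqueness of the singularity $q$ with $f^{-1}(p)=\{q\}$, injectivity of $f$ on a small ball around $q$, the fact that every $\mathcal Y_r$-orbit emanates from $q$, and time-preservation of $f$ to separate orbits and to get injectivity along each orbit. Your synchronized backward-flow argument (if $f(x)=f(y)$ then $f(\varphi_t(x))=f(\varphi_t(y))$ for all $t\le 0$, and for $t\ll 0$ both points lie in the ball where $f$ is injective) does both steps at once, and it makes explicit the backward completeness that the paper's phrase ``each orbit is uniquely determined by its behavior near $p$'' leaves implicit.
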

	
	Theorem \ref{Th1} can be regarded as a higher-dimensional radial case of
	Theorem~1 of \cite{LlibreMello}.  It transforms a question about
	the global injectivity of a map into a question about the global dynamics of a vector field in $\mathbb{R}^n$.  This dynamical viewpoint is particularly well suited to polynomial maps and leads naturally to the  Jacobian conjecture  and real Jacobian conjecture.
	
In 1939, Keller \cite{Keller} considered polynomial maps in
$\mathbb{C}^n$ and proposed the following conjecture: if
$f=(f_1,\ldots,f_n):\mathbb{C}^n\rightarrow \mathbb{C}^n$ is a polynomial map
such that $\operatorname{det}Df(x)$ is a nonzero constant for every
$x\in\mathbb{C}^n$, then $f$ is injective. This is the well-known Jacobian conjecture. Note that if the polynomial map $f$ is injective, then it is a global diffeomorphism (see \cite{Bialynicki-Birula}). 
Recently, Alp\"oge announced a counterexample to the Jacobian
conjecture in dimension three. Consequently, the conjecture fails in
every dimension $n>2$, while the two-dimensional case remains open;
see Gao \cite{Gao26} for the summary of recent work.

In 1983, Randall \cite{Randall} considered the polynomial map $f$ in the Euclidean space
$\mathbb{R}^n$ and formulated the following conjecture: if
\begin{equation}
	\label{polynomial map}
	f=(f_1,\ldots,f_n):\mathbb R^n\to\mathbb R^n
\end{equation}
 is a polynomial map
satisfying $J_f(x)\neq0$ for $x\in\mathbb R^n$, then $f$ is injective. This conjecture is known as
the real Jacobian conjecture. The conjecture is false in full generality because of Pinchuk's counterexample in dimension two (see \cite{Pinchuk}). Nevertheless, it remains important to find additional
	effective conditions under which the conclusion is true.  Many such
	conditions have been obtained in dimension two (see \cite{Braundos,BraunGineLlibre16,DominguesLlibreMello26,GineLlibre21,TianCen24}), some of which were mentioned above. Specifically, using Theorem \ref{Th BL} in the annular case, or equivalently the result of
Gavrilov \cite{Gavrilov97} or Sabatini \cite{Sabatini}, Braun, Gin\'e and Llibre \cite{BraunGineLlibre16} obtained the following result. Here $\overline{p}$ denotes the highest homogeneous part of the polynomial $p$.
    \begin{theorem}[see \cite{BraunGineLlibre16}, Theorem~1]
        Let $f=(f_1, f_2): \mathbb{R}^2 \rightarrow \mathbb{R}^2$ be a polynomial map such that $J_f$ is nowhere zero and $f(0,0)=(0,0)$. If  $\overline{f_1\partial_{x_1}f_1+f_2\partial_{x_1}f_2}$ and $\overline{f_1\partial_{x_2}f_1+f_2\partial_{x_2}f_2}$ have no real linear factors in common, then $f$ is injective.
    \end{theorem}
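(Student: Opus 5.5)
The plan is to follow the route of \cite{BraunGineLlibre16}: reduce injectivity to a global-center property of a Hamiltonian vector field, via the annular case of Theorem~\ref{Th BL} (equivalently the results of Gavrilov \cite{Gavrilov97} and Sabatini \cite{Sabatini}). Set $H=(f_1^2+f_2^2)/2$ and consider
\[
\mathcal Y(x)=\bigl(-\partial_{x_2}H,\ \partial_{x_1}H\bigr),
\]
which is, up to the positive factor $J_f$ or a time reparametrization, the pullback of the canonical annular field $(-x_2,x_1)$ by $f$. Since $J_f\neq0$, we have $\nabla H=Df^{T}f$, which vanishes exactly on $f^{-1}(0)$. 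We may assume $f(0,0)=(0,0)$, so the origin is a singular point of $\mathcal Y$.

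The first step is to show that the origin is a center. Near any zero $p$ of $f$, $H$ is a positive definite quadratic form composed with a local diffeomorphism, so $p$ is a nondegenerate minimum of $H$ and hence a center of $\mathcal Y$. By Theorem~\ref{Th BL} (annular case), $f$ is injective provided the origin is a \emph{global} center, that is, $\mathbb{R}^2\setminus\{0\}$ is filled with periodic orbits. It therefore suffices to show that the period annulus of the origin is all of $\mathbb{R}^2$.

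To prove this, I would show that $H$ is proper, i.e. $H\to\infty$ as $|x|\to\infty$. Then every level set $\{H=c\}$ is compact. Its critical points are zeros of $f$, which are isolated minima with $H=0$, so for $c>0$ each level is a finite union of closed orbits. The hypothesis on the highest-degree parts is used to exclude escape to infinity in every direction. Write $P=f_1\partial_{x_1}f_1+f_2\partial_{x_1}f_2=\partial_{x_1}H$ and $Q=\partial_{x_2}H$. If $\overline{P}$ and $\overline{Q}$ have no common real linear factor, then the homogeneous field $(\overline P,\overline Q)$ vanishes only at the origin. Hence $|\nabla H(x)|\ge c|x|^{d}$ for large $|x|$, where $d=\deg\nabla H$. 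I would justify this by looking at $\overline P$ and $\overline Q$ on the unit circle (treating separately the case where their degrees differ, in which the lower-degree part enters) and then using Euler-type estimates along rays. Since $\nabla H$ has no zeros near infinity and points consistently outward, an argument with the Poincar\'e compactification in the style of \cite{BraunGineLlibre16} shows that the circle at infinity contains no infinite singular points of $\mathcal Y$. The field $(\overline{-Q},\overline P)$ then rotates at infinity, so a neighborhood of infinity consists of periodic orbits, with infinity acting as a "center" on the Poincar\'e disk.

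Once this is established, the topology finishes the argument. $\mathcal Y$ is a Hamiltonian field with only centers as finite singularities, since every zero of $\nabla H$ is a zero of $f$ and therefore a nondegenerate minimum. It also has no singularities at infinity. A Hamiltonian flow on the disk whose singularities are all centers, with the boundary a periodic region, must have exactly one center: otherwise the boundary between period annuli would have to contain a separatrix and hence a saddle or an infinite singular point. So the origin is a global center, and Theorem~\ref{Th BL} gives injectivity.

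The main obstacle is the analysis at infinity: turning the hypothesis "no common real linear factor" into the statement that the compactified field has no singular points on the equator. The difficulty is that the highest-degree parts of $P$ and $Q$ may have different degrees or may cancel in the compactified expression, so care is needed there.
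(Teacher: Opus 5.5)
The paper does not prove this statement. It quotes it from Braun--Gin\'e--Llibre and describes it as a consequence of the annular case of Theorem~\ref{Th BL} (Gavrilov/Sabatini), which is exactly the reduction you use. That reduction is fine. Your argument really hinges on one claim: that $H=\frac12(f_1^2+f_2^2)$ is proper. Once you have it, $f$ is proper and Hadamard's theorem gives injectivity directly, so the Poincar\'e disc and period-annulus topology are not needed.

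\textbf{The degenerate case breaks your analysis at infinity.} Let $\overline H$, of degree $2m$, be the top form of $H$. If $\overline H=c\,x_2^{2m}$ (or $c\,x_1^{2m}$), then $\overline Q=\partial_{x_2}\overline H=2mc\,x_2^{2m-1}$. Meanwhile $P=\partial_{x_1}H$ comes only from lower-order terms, and the hypothesis just says $\overline P(1,0)\neq 0$. Take $f=(x_2^3+x_2,\,x_1)$: then $f(0)=0$, $J_f=-(3x_2^2+1)$, $\overline P=x_1$ and $\overline Q=3x_2^5$, so every hypothesis holds. But on the $x_1$-axis $\nabla H=(x_1,0)$, so the bound $|\nabla H|\ge c|x|^{\deg\nabla H}=c|x|^5$ fails. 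Also the top part of $\mathcal Y$ is $(-3x_2^5,0)$, giving $x_1B_5-x_2A_5=3x_2^6$, so the compactified field has singular points at both ends of the $x_1$-axis. Hence ``no infinite singular points'', ``the equator is periodic'' and your closing topological argument, which relies on them, all fail here. Treating ``the case where the degrees differ'' separately in the gradient estimate does not rescue them.

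\textbf{The generic case is asserted, not proved.} When $\deg\overline P=\deg\overline Q$, the claim that $\nabla H$ ``points consistently outward'' is never justified. Non-vanishing of $\nabla H$ near infinity does not by itself stop level curves of $H$ from escaping to infinity. The missing idea combines two facts. First, Euler's identity gives $x_1\overline P+x_2\overline Q=2m\overline H$, since here $\overline P=\partial_{x_1}\overline H$ and $\overline Q=\partial_{x_2}\overline H$. Second, $\overline H=\frac12\sum\overline{f_i}^{\,2}$ over the top-degree components. A real zero direction of $\overline H$ then yields a real linear form $L$ dividing each such $\overline{f_i}$. So $L^2\mid\overline H$, and $L$ divides both $\overline P$ and $\overline Q$. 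Thus the hypothesis forces $\overline H>0$ on $\mathbb R^2\setminus\{0\}$, and $H$ is proper immediately.

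\textbf{How to repair the degenerate case.} You need a separate argument, for example:
\begin{enumerate}
\item On a thin cone $\{x_1\ge R,\ |x_2|\le\epsilon x_1\}$, $P=\partial_{x_1}H$ has the sign of $\overline P(1,0)$ and satisfies $|P|\ge c\,x_1^{\deg\overline P}$.
\item Integrating along the horizontal line $x_2=0$, the condition $H\ge 0$ forces $\overline P(1,0)>0$.
\item Then $H\to\infty$ inside the cone, by integrating horizontally from the cone's boundary rays, where $\overline H>0$.
\item The left cone is handled symmetrically, and outside the cones $\overline H>0$.
\end{enumerate}
With properness established in both cases, the theorem follows.
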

Using Theorem \ref{Th BL} in the strip case, Domingues, Llibre and Mello
\cite{DominguesLlibreMello26} proved the following theorem.
\begin{theorem}[see \cite{DominguesLlibreMello26}, Theorem~1]
    Let $f=(f_1, f_2): \mathbb{R}^2 \rightarrow \mathbb{R}^2$ be a polynomial map such that $J_f$ is nowhere zero. If $\overline{\partial_{x_1}f_2}$ and $\overline{\partial_{x_2}f_2}$ have no real linear factors in common, then $f$ is injective.
\end{theorem}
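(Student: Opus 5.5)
The plan is to reduce injectivity to the connectedness of the level sets of $f_2$, and then to obtain that connectedness from the behaviour of the Hamiltonian vector field $\mathcal H=(\partial_{x_2}f_2,-\partial_{x_1}f_2)$ at infinity. First observe that the pullback of the canonical strip field is
\[
Df(x)^{-1}(1,0)^T=\frac{1}{J_f(x)}\bigl(\partial_{x_2}f_2,\,-\partial_{x_1}f_2\bigr).
\]
Since $J_f$ has constant sign on $\mathbb R^2$, this pullback has the same orbits as $\mathcal H$, possibly with reversed orientation. The field $\mathcal H$ has no singular points, because $\nabla f_2\neq 0$ wherever $J_f\neq0$. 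Moreover, along orbits of $\mathcal H$ we have $\dot f_2=0$ and $\dot f_1=\partial_{x_1}f_1\partial_{x_2}f_2-\partial_{x_2}f_1\partial_{x_1}f_2=J_f\neq 0$. Hence $f_1$ is strictly monotone on each connected component of each level curve $\{f_2=c\}$, and each component is a single unbounded orbit. Consequently $f$ is injective if and only if every level set $\{f_2=c\}$ is connected. In terms of Theorem \ref{Th BL} (strip case, $U=\mathbb R^2$), this is exactly the statement that $\mathcal H$ is a parallel strip flow, i.e. has no separatrices in the sense of Markus--Neumann. I will argue connectedness directly.

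Next I study infinity via the Poincar\'e compactification of $\mathcal H$. Write $d=\deg f_2$ and $P=\partial_{x_2}f_2$, $Q=-\partial_{x_1}f_2$. The infinite singular points are the real zeros on $\mathbb S^1$ of $x_1\overline Q-x_2\overline P$. By Euler's formula this equals $-d\,\overline{f_2}$. So the infinite singular points are the directions of the real linear factors of $\overline{f_2}$. If a real linear factor $\ell$ appeared in $\overline{f_2}$ with multiplicity at least two, then $\ell$ would divide both $\partial_{x_1}\overline{f_2}=\overline{\partial_{x_1}f_2}$ and $\partial_{x_2}\overline{f_2}=\overline{\partial_{x_2}f_2}$, contradicting the hypothesis. (The degenerate case where these are not the top parts, i.e. $\overline{f_2}$ depends on one variable only, is handled separately.) Thus $\overline{f_2}=\ell_1\cdots\ell_k\,g$ with pairwise non-proportional simple real factors $\ell_i$ and $g$ having no real linear factors. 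The hypothesis also ensures that $\overline P,\overline Q$ do not vanish simultaneously in any real direction.

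Now I perform a local analysis in a thin sector around each direction $\ell_i=0$. Writing $f_2=\overline{f_2}+(\text{lower order})$, the derivative of $f_2$ transverse to the ray is of order $r^{d-1}$ and dominates the lower-order terms there. An implicit function argument in polar-type coordinates then shows that, for each $c$, the level set $\{f_2=c\}$ meets the sector, for $r$ large, in exactly one arc tending to infinity in that direction. Outside these sectors $|f_2|\to\infty$ uniformly, so $\{f_2=c\}\cap\{|x|>R\}$ consists of exactly $2k$ such arcs, one per end. Equivalently, each infinite singular point of the compactified flow is saddle-like: it has exactly one finite orbit per level reaching it, with hyperbolic sectors on either side.

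The main obstacle is the final global step: passing from exactly one branch of each level in each of the $2k$ end directions to connectedness of $\{f_2=c\}$. I would argue by continuity in $c$. The $2k$ ends of $\{f_2=c\}$ are paired by its components, which are non-crossing chords of the Poincar\'e disc, and this pairing is locally constant in $c$ by the uniform local analysis above together with the absence of finite singularities. If some level had two components, I would use the Jordan curve theorem to locate a limit orbit bounding a region with a non-canonical boundary, i.e. a separatrix. By the local structure, such a separatrix must be the finite orbit landing at an infinite saddle-like point, and I would show this orbit leads to a contradiction, either with the monotonicity of $f_1$ or with the sign of $f_2-c$ on the two sides of each simple factor $\ell_i$. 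With connectedness established, injectivity follows from the monotonicity of $f_1$ along each level curve.
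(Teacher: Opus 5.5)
The paper does not prove this statement; it only cites \cite{DominguesLlibreMello26} and indicates that the proof applies the strip case of Theorem~\ref{Th BL} to the Hamiltonian field $(\partial_{x_2}f_2,-\partial_{x_1}f_2)$. Your reduction fits that route: this field is the strip pullback up to the factor $1/J_f$, and $f_1$ is strictly monotone along its orbits, so it suffices that every level set of $f_2$ be connected. Your sector analysis at infinity is also correct in the nondegenerate case.

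\textbf{First gap: the global step.} You leave it as a plan, and the plan rests on a wrong local picture. Your own analysis shows that for every $c$ an arc of $\{f_2=c\}$ lands at each of the $2k$ infinite singular points. So a one-parameter family of orbits reaches each of them, and on the finite side they are nodes with a parabolic sector, not saddles. Concretely, with $\overline{f_2}=x_2h$ and $h(1,0)\neq0$, the compactified field in $U_1$ is triangular at the origin with eigenvalues $-d\,h(1,0)$ and $-h(1,0)$. Hence there are no hyperbolic sectors and no separatrix to locate. Neither the monotonicity of $f_1$ nor the sign of $f_2-c$ can supply the contradiction: a level set made of $k\ge2$ non-crossing chords, with $f_2-c$ changing sign across each of them, is consistent with both.

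What is missing is a global count. Every component of $\{f_2=c\}$ is a properly embedded line with two ends, and each of your $2k$ arcs is one end. So $\{f_2=c\}$ has exactly $k$ components for every $c$, and you must prove $k=1$. Two ways to do this:
\begin{itemize}
\item \emph{Index argument.} On the unit circle, $\nabla\overline{f_2}=d\,\overline{f_2}\,\hat r+\partial_\theta\overline{f_2}\,\hat\theta$ never vanishes (this is exactly the hypothesis in the nondegenerate case) and has degree $1-k$. But $\nabla f_2$ has no zeros in $\mathbb R^2$ and is homotopic to $\nabla\overline{f_2}$ on large circles, so $1-k=0$.
\item \emph{Using your pairing.} Make your locally constant pairing precise by transporting compact pieces of leaves with the flow of $\nabla f_2/\|\nabla f_2\|^2$. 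This shows that, for each pair of ends, the union of the leaves realizing that pair is open. These finitely many disjoint open sets cover $\mathbb R^2$, so only one pair occurs and $k=1$.
\end{itemize}

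\textbf{Second gap: the postponed degenerate case.} The case $\overline{f_2}=a x_1^d$ (or $a x_2^d$) with $d\ge2$ is not covered. The direction $x_1=0$ is a $d$-fold root, the transverse derivative no longer dominates the lower-order terms near that ray, and your sector argument fails. The hypothesis does control this case: $x_1\nmid\overline{\partial_{x_2}f_2}$ forces $f_2=a x_1^d+b x_2^{e+1}+(\text{lower weighted degree})$ for the weight $(e+1,d)$, where $e=\deg\partial_{x_2}f_2$ and $b\neq0$. The quasi-homogeneous part then has nonvanishing gradient off the origin. The same end count and index argument still have to be carried out on the weighted circle, as in the weighted compactification of Subsection~2.3, before the proof is complete.
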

Using Theorem \ref{Th BL} in the radial case, Llibre and Mello
\cite{LlibreMello} obtained the following two results.
\begin{theorem}[see \cite{LlibreMello}, Theorem~1]
\label{Th-L-M1}
    Let $f=(f_1, f_2): \mathbb{R}^2 \rightarrow \mathbb{R}^2$ be a polynomial local orientation reversing diffeomorphism and assume $f(0,0)=(0,0)$. Define the polynomial vector field
\begin{equation}
\label{vector G}
    \mathcal{G}: \mathbb{R}^2 \rightarrow \mathbb{R}^2, \quad  \mathcal{G}=( \mathcal{G}_1, \mathcal{G}_2)=(f_1 \partial_{x_2} f_2-f_2 \partial_{x_2} f_1,\,-f_1 \partial_{x_1} f_2+f_2 \partial_{x_1} f_1).
\end{equation}
Then the vector field $ \mathcal{G}$ is bounded if and only if $f$ is injective.
\end{theorem}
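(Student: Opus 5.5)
The plan is to reduce the statement to the radial case of Theorem \ref{Th BL}, taking $\mathcal X=\mathcal X_r=(x_1,x_2)$ on $f(\mathbb R^2)$. First identify $\mathcal G$ with the pullback $\mathcal Y_r(x)=Df(x)^{-1}f(x)$ up to a scalar factor. With $Df=\begin{pmatrix}\partial_{x_1}f_1&\partial_{x_2}f_1\\ \partial_{x_1}f_2&\partial_{x_2}f_2\end{pmatrix}$ we have $Df^{-1}=J_f^{-1}\begin{pmatrix}\partial_{x_2}f_2&-\partial_{x_2}f_1\\ -\partial_{x_1}f_2&\partial_{x_1}f_1\end{pmatrix}$, so $Df^{-1}f=J_f^{-1}\mathcal G$. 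Since $f$ is orientation reversing, $J_f<0$ everywhere. Hence $\mathcal G=J_f\,\mathcal Y_r$ has the same orbits as $\mathcal Y_r$, with the time orientation reversed. Its singularities are exactly $f^{-1}(0)$, which contains the origin.

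For the direction ($f$ injective $\Rightarrow$ $\mathcal G$ bounded), I read ``bounded'' as in \cite{LlibreMello}: every orbit of $\mathcal G$ is bounded in forward time, equivalently $\mathcal G$ has a global attractor. Suppose $f$ is injective. By \cite{Bialynicki-Birula} it is a global diffeomorphism onto $f(\mathbb R^2)$, and $f$ conjugates $\mathcal Y_r$ with the radial field on $f(\mathbb R^2)$. That field is radial surrounding $0$, so every orbit of $\mathcal Y_r$ emanates from the origin in backward time. Reversing time, every orbit of $\mathcal G$ tends to the origin, which gives boundedness.

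For the converse, assume every forward orbit of $\mathcal G$ is bounded. Along orbits of $\mathcal Y_r$, $\frac{d}{dt}f(x(t))=f(x(t))$, so $|f|$ grows like $e^{t}$. Along $\mathcal G$-orbits, $|f|$ is therefore strictly decreasing wherever $f\neq 0$, and it decreases at rate comparable to $|J_f|\,|f|$. The steps are then:
\begin{enumerate}
\item Show that a bounded forward $\mathcal G$-orbit has its $\omega$-limit set inside $\{f=0\}$. Here $|f|$ is a strict Lyapunov function, and $J_f$ is bounded away from $0$ on compact sets.
\item Use that $f^{-1}(0)$ is discrete, since $f$ is a local diffeomorphism, and that each of its points is a hyperbolic node of $\mathcal G$. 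This gives a partition of $\mathbb R^2$ into open basins of attraction of the zeros of $f$. Connectedness then forces $f^{-1}(0)=\{0\}$, with every orbit tending to $0$.
\item Conclude that $\mathcal Y_r$ is topologically equivalent to the canonical radial field on $\mathbb R^2\setminus\{0\}$. The homeomorphism is built from a small circle transversal around the node and the flow, with each orbit reaching infinity in $\mathbb R^2$ backwards in $\mathcal G$-time. Then apply Theorem \ref{Th BL} (radial case) to get that $f$ is a diffeomorphism onto $f(\mathbb R^2)$.
\end{enumerate}

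The main obstacle is step (iii), in particular showing that backward $\mathcal G$-orbits leave every compact set. Boundedness in forward time alone does not exclude a backward orbit accumulating on some limit set. I would argue through $|f|$: along a backward orbit, $|f|$ increases, so the orbit cannot accumulate on any compact set. If it did, that set would lie in the invariant region where the increase of $|f|$ stops, which is $f^{-1}(0)$. That set is repelling in backward time, which is a contradiction. If the intended meaning of ``bounded'' is instead $\sup|\mathcal G|<\infty$, I would replace step (i) by the observation that a bounded field has a complete flow. I would then rerun the same Lyapunov argument, using the polynomial growth of $f$ to control the orbits.
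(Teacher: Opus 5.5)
Your proposal is correct. The paper obtains this statement as the case $n=2$ of Theorem \ref{Th2}, where $\mathcal G=J_f\mathcal Y_r$, and you share its skeleton:
\begin{itemize}
\item $H=\frac12\|f\|^2$ is a strict Lyapunov function with $\frac{d}{dt}H=2J_fH$;
\item $\omega$-limit sets lie in $f^{-1}(0)$;
\item every zero is a hyperbolic sink with $D\mathcal G(p)=J_f(p)I$;
\item the radial criterion finishes the argument.
\end{itemize}

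The genuine difference is how you show $f^{-1}(0)=\{0\}$. The paper first proves that negative orbits are unbounded and declares infinity a repeller. It then counts indices: each zero has index $(-1)^n$ by Lemma \ref{Lemma index}, while Theorem \ref{Th index} forces the sum of the indices to be $(-1)^n$. You instead note that $\mathbb R^2$ is the disjoint union of the basins of the sinks, each open and nonempty, so connectedness leaves only one. Your argument is more elementary. It uses no information at infinity and no compactification or index theorem, and it works verbatim in $\mathbb R^n$.

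Your treatment of backward orbits is also sharper than the paper's. You show the $\alpha$-limit set is empty, so backward orbits leave every compact set, which is what a transversal-circle construction needs. The paper only proves that negative orbits are unbounded and then asserts that they tend to infinity.

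A few points to tidy:
\begin{itemize}
\item For injective $\Rightarrow$ bounded, what you need from the Bia{\l}ynicki-Birula--Rosenlicht theorem is surjectivity, $f(\mathbb R^2)=\mathbb R^2$. Being a diffeomorphism onto the image is automatic for an injective local diffeomorphism, and by itself it would not make $\mathcal X_r|_{f(\mathbb R^2)}$ radial surrounding $0$.
\item For the converse, Theorem \ref{Th BL} needs $\mathcal X_r|_{f(\mathbb R^2)}$ to be radial surrounding $0$, and this deserves a line. For example, $f(\mathbb R^2)$ is open and star-shaped about $0$, because $f(\gamma(t))=e^tf(\gamma(0))$ along $\mathcal Y_r$-orbits, all of which emanate from $0$. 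The radial field on such a set is equivalent to $\mathcal X_r$ by reparametrizing along rays. The paper glosses over the same point.
\item You can skip step (iii), and with it your ``main obstacle'', entirely. Once every $\mathcal Y_r$-orbit tends to $0$ as $t\to-\infty$, injectivity is immediate. The case $f(x)=0$ is settled by step (ii). If $f(x_1)=f(x_2)=y\neq0$, the orbits through $x_1,x_2$ satisfy $f(\gamma_i(t))=e^ty$. For $t$ very negative both lie in a neighbourhood of $0$ on which $f$ is injective, so $\gamma_1(t)=\gamma_2(t)$ there. Uniqueness of solutions then gives $x_1=x_2$.
\item Drop the fallback reading $\sup|\mathcal G|<\infty$. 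A bounded polynomial field is constant, and under that reading the statement is false. For example, $f(x_1,x_2)=(x_2,x_1)$ is injective and orientation reversing, yet $\mathcal G=(-x_1,-x_2)$. The paper's definition, that each forward orbit eventually enters and stays in a compact set, is exactly the weak reading your argument already uses.
\end{itemize}
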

    \begin{theorem}[see \cite{LlibreMello}, Theorem~2]
    \label{Th-L-M2}
        Let $f=(f_1, f_2): \mathbb{R}^2 \rightarrow \mathbb{R}^2$ be a polynomial local orientation reversing diffeomorphism. Assume  that $f(0,0)=(0,0),\ \operatorname{deg}(f_1)=\operatorname{deg}(f_2)>1$ and $\overline{f_1}$ is not a multiple of $\overline{f_2}$. If the components $ \mathcal{G}_1$ and $ \mathcal{G}_2$ of the vector field \eqref{vector G} satisfy
$$
\overline{ \mathcal{G}_1}(1, s)<0 \quad \text { and } \quad \overline{ \mathcal{G}_2}(0,1)<0, \quad \forall s \in \mathbb{R},
$$
then $f$ is injective.
    \end{theorem}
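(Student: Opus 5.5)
The plan is to reduce the statement to Theorem~\ref{Th-L-M1}: it suffices to show that the hypotheses on the leading parts of $\mathcal G$ force $\mathcal G$ to be bounded. By this I mean that no orbit of $\mathcal G$ escapes to infinity, so every forward orbit stays in a compact set. Since $J_f<0$ everywhere, we have $\mathcal G(x)=J_f(x)\,Df(x)^{-1}f(x)=-|J_f(x)|\,\mathcal Y_r(x)$. Thus $\mathcal G$ is a time-reversed, positively rescaled copy of the radial pullback. Its only finite singularities are the points of $f^{-1}(0)$, all of them hyperbolic attractors because $Df$ is invertible there. The issue is therefore what happens near infinity.

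\textbf{Step 1 (leading parts).} Put $d=\deg f_1=\deg f_2>1$. The degree $2d-1$ parts are
\[
\overline{\mathcal G_1}=\overline{f_1}\,\partial_{x_2}\overline{f_2}-\overline{f_2}\,\partial_{x_2}\overline{f_1},\qquad
\overline{\mathcal G_2}=-\overline{f_1}\,\partial_{x_1}\overline{f_2}+\overline{f_2}\,\partial_{x_1}\overline{f_1}.
\]
They are not identically zero because $\overline{f_1}$ is not a multiple of $\overline{f_2}$. The hypotheses $\overline{\mathcal G_1}(1,s)<0$ and $\overline{\mathcal G_2}(0,1)<0$ confirm that this is the true top degree. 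Euler's identity gives
\[
x_1\overline{\mathcal G_2}-x_2\overline{\mathcal G_1}=-d\,\overline{f_1}\,\overline{f_2}+d\,\overline{f_2}\,\overline{f_1}=0 .
\]
Hence in the Poincaré compactification the whole equator is filled with singular points. The leading term of the compactified field is purely radial, and its sign is governed by $\overline{\mathcal G_1}$ in the chart $U_1$ and by $\overline{\mathcal G_2}$ at the endpoint direction $(0,1)$.

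\textbf{Step 2 (behaviour at infinity).} In the chart $U_1$, with $x_1=1/v$ and $x_2=u/v$, the compactified system after multiplying by $v^{2d-2}$ has the form
\[
\dot u=v\,A(u,v),\qquad \dot v=-v^{2}\,\overline{\mathcal G_1}(1,u)+O(v^{3}).
\]
The $\dot u$ component carries a factor $v$ by Step~1. We divide by one factor of $v$, which does not change orbits in $v>0$, and call the result the desingularized field; in it the equator $v=0$ becomes a line of singular points. The condition $\overline{\mathcal G_1}(1,u)<0$ for all $u$ gives $\dot v>0$ for small $v>0$ in the desingularized field, uniformly on compact $u$-sets. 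This means that, near infinity in this chart, the flow moves away from the equator into the finite plane. The same computation in the chart $U_2$ at $u=0$ handles the remaining direction $(0,\pm1)$, using $\overline{\mathcal G_2}(0,1)<0$; odd degree $2d-1$ takes care of the antipodal charts.

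\textbf{Step 3 (conclusion).} The sign information is then turned into a trapping region. I would use the function $R=v$ in the charts (essentially $1/|x|$), or directly $V(x)=|x|^2$, for which $\dot V=2\bigl(x_1\overline{\mathcal G_1}+x_2\overline{\mathcal G_2}\bigr)+\text{lower order}$. The aim is to show that the radial component $x_1\overline{\mathcal G_1}+x_2\overline{\mathcal G_2}$ is negative definite on the unit circle. On $x_1\neq0$ this follows from $\overline{\mathcal G_1}(1,s)<0$ together with the identity $x_1\overline{\mathcal G_2}=x_2\overline{\mathcal G_1}$, which yields
\[
x_1\overline{\mathcal G_1}+x_2\overline{\mathcal G_2}=\frac{x_1^2+x_2^2}{x_1}\,\overline{\mathcal G_1}.
\]
At $x_1=0$ the condition on $\overline{\mathcal G_2}(0,1)$ takes over. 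Hence outside a large disk $|x|$ decreases along orbits, so every forward orbit is bounded and $\mathcal G$ is bounded. Theorem~\ref{Th-L-M1} then gives injectivity of $f$.

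\textbf{Main obstacle.} The delicate point is the parity bookkeeping in the formula above. Since $\overline{\mathcal G_1}$ has odd degree, $\overline{\mathcal G_1}(x_1,x_2)=x_1^{2d-1}\overline{\mathcal G_1}(1,x_2/x_1)$. Substituting, the radial component equals $(x_1^2+x_2^2)\,x_1^{2d-2}\,\overline{\mathcal G_1}(1,x_2/x_1)$, with $x_1^{2d-2}$ an even power. So its sign is that of $\overline{\mathcal G_1}(1,s)$ on both half-planes, and the hypothesis gives negativity everywhere off $x_1=0$. The step that needs real care is uniformity near the direction $x_1=0$, where $s\to\pm\infty$ and only the single value $\overline{\mathcal G_2}(0,1)<0$ is available. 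There I would argue in the chart $U_2$ directly, writing $\overline{\mathcal G_2}(s,1)$ with $s=x_1/x_2$ and using continuity near $s=0$.
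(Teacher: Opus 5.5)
Your proof is correct, but the step that closes it differs from the paper's. The paper does not prove this statement itself; it is quoted from Llibre--Mello, whose argument relies on the planar Markus--Yamabe theorem. The paper recovers it as the case $n=2$, $\mathbf w=(1,1)$ of Theorem~\ref{Th3}. By \eqref{planar case} with $m=d$, the top part of $\mathcal G$ is $\bigl(\tfrac{x_1}{d}J_F,\tfrac{x_2}{d}J_F\bigr)$ with $F=(\overline{f_1},\overline{f_2})$, and the two sign hypotheses amount to $J_F<0$ on $\mathbb R^2\setminus\{0\}$. Boundedness is then read off from the Poincar\'e compactification: the equator is a line of singularities with transverse eigenvalue $-J_F/d>0$, so infinity repels, and Theorem~\ref{Th2} gives injectivity.

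Your Step~2 sketches exactly that route. The argument you actually complete, however, is Step~3, which replaces the compactification by the Lyapunov function $|x|^2$, whose derivative has leading part $\tfrac{2}{d}|x|^2J_F$. This gives a fully elementary proof with an explicit absorbing disk. It also avoids a step the paper asserts without detail: passing from a positive transverse eigenvalue along a non-isolated set of equilibria at infinity to boundedness of every forward orbit, which needs a normal-hyperbolicity or uniformity argument. The compactification route, in turn, lets the paper handle arbitrary weights and dimension in one framework. A weighted analogue of your function, $\sum_i x_i^{2L/w_i}$ with $L$ a common multiple of the $w_i$, would also work there.

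Two minor points. First, in Step~2 the powers of $v$ are off. After multiplying by $v^{2d-2}$ one has $\dot v=-v\,\overline{\mathcal G_1}(1,u)+O(v^2)$ and $\dot u=v\,A(u,v)$. So the equator is a line of singularities before dividing by $v$, and after dividing the field crosses it transversally. The sign conclusion is unaffected, and Step~2 is not needed anyway.

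Second, your ``main obstacle'' is not one. The radial part $x_1\overline{\mathcal G_1}+x_2\overline{\mathcal G_2}$ is a homogeneous polynomial that you have already shown to be negative at every point of the unit circle; on the axis $x_1=0$ it equals $x_2^{2d}\,\overline{\mathcal G_2}(0,1)$. Compactness of the circle therefore gives the uniform bound $-c|x|^{2d}$ at once, with no separate argument in the chart $U_2$.
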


	    While the higher-dimensional case is much less understood (see, for example, \cite{CimaGasullManosas96,Liu}). Recent work of Liu and Tian \cite{Liu} gives several
		$n$-dimensional sufficient conditions in terms of the highest homogeneous or
		quasi-homogeneous parts of the polynomial map and of the gradient vector
		field associated with $\|f\|^2/2$.
		
		In the present paper we use Theorem \ref{Th1} to obtain two necessary and sufficient conditions in order that the $n$-dimensional real Jacobian conjecture holds by using the qualitative theory of $n$-dimensional differential systems. Let
		$f:\mathbb R^n\to\mathbb R^n$ be the polynomial map \eqref{polynomial map} such that $f(0)=0$. Because $\mathbb R^n$ is connected, $J_f$ has a constant
		sign. By composing $f$ with an orientation-reversing linear isomorphism if necessary, we may assume without loss of generality that $J_f(x)<0$ for all $x\in\mathbb R^n$.
		By \eqref{radial-pullback} and Cramer's rule, $J_f(x)\cdot\mathcal{Y}_r(x)$ is a polynomial vector field. Multiplying $\mathcal Y_r$ by the nowhere-vanishing function $J_f<0$ does not change its orbit foliation because, under the time scale $J_f$, it only reverses the time orientation of the orbits.
		It is also useful to introduce
	\begin{equation}\label{Hz}
		H(x)=\frac12\|f(x)\|^2
		=
		\frac12\sum_{i=1}^n f_i(x)^2.
	\end{equation}
		Then the {\it gradient vector field} with respect to $H$ is as follows:
	\[
	\nabla H(x)=Df(x)^T f(x),
	\]
	and the {\it negative gradient vector field} $-\nabla H$ gives another natural
	dynamical system associated with $f$. 
		We shall say that a vector field on $\mathbb R^n$ is \emph{bounded} if the positive orbit of every point eventually enters and remains in a compact subset of $\mathbb R^n$. We say that the origin is \emph{globally
			asymptotically stable} if it is an equilibrium and every positive
		orbit tends to the origin as $t\to+\infty$.
	
	Our second main result is stated as follows.
	
\begin{mytheorem}
	\label{Th2}
	Let $f: \mathbb{R}^n \rightarrow \mathbb{R}^n$ be a polynomial map such that $J_f(x)<0$ for all $ x\in\mathbb{R}^n$, and $f(0)=0$. Then the following statements are equivalent
	\begin{enumerate}[leftmargin=*]
		\item[\rm(i)] The polynomial vector field $J_f\cdot\mathcal{Y}_r$ (or $-\nabla H$) is bounded.
		\item[\rm(ii)] The origin of the polynomial vector field $J_f\cdot\mathcal{Y}_r$ (or $-\nabla H$) is globally asymptotically stable.
		\item[\rm(iii)] The map $f$ is injective.
	\end{enumerate}
\end{mytheorem}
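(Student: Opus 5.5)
We prove the cycle (iii)$\Rightarrow$(ii)$\Rightarrow$(i)$\Rightarrow$(iii), separately for $J_f\cdot\mathcal Y_r$ and for $-\nabla H$. Both fields are tied to $H=\frac12\|f\|^2$ along orbits. For $\mathcal Y_r=Df^{-1}f$ we have $\frac{d}{dt}f(x(t))=f(x(t))$. After the time change by $J_f<0$, the flow of $J_f\cdot\mathcal Y_r$ therefore satisfies $\frac{d}{dt}f(x(t))=J_f(x(t))\,f(x(t))$. So $f(x(t))$ moves along the ray through $f(x(0))$ toward $0$, and $\dot H=2J_fH\le 0$. For $-\nabla H$ we have $\dot H=-\|Df^Tf\|^2\le 0$. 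In both cases the equilibria are exactly $f^{-1}(0)$, because $Df$ is invertible. This is the basic Lyapunov structure.

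\emph{(iii)$\Rightarrow$(ii).} If $f$ is injective, then by the result quoted after Keller's conjecture (Bialynicki-Birula--Rosenlicht) $f$ is a polynomial diffeomorphism onto $\mathbb R^n$. In particular $f$ is proper, $f^{-1}(0)=\{0\}$, and $H$ is proper and positive definite. Hence $H$ is a strict proper Lyapunov function for both fields. Sublevel sets are compact, so orbits exist for all $t\ge 0$. LaSalle's principle then sends every positive orbit to the largest invariant set in $\{\dot H=0\}=\{0\}$. For $J_f\mathcal Y_r$ one can see this directly: $f(x(t))=e^{\int J_f}f(x(0))$, and $\int_0^\infty J_f=-\infty$ because $|J_f|$ is bounded below on the compact sublevel set. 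Since the origin is stable, it is globally asymptotically stable.

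\emph{(ii)$\Rightarrow$(i)} is immediate: if every positive orbit tends to $0$, it eventually enters and stays in a closed ball around $0$.

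\emph{(i)$\Rightarrow$(iii).} This is the main step. Assuming boundedness, take any $x_0$. Its positive orbit $\gamma^+$ lies eventually in a compact $K$, so its $\omega$-limit set is nonempty, compact and invariant. Along $\gamma^+$, $H$ decreases to a limit $c\ge 0$, so $H\equiv c$ on the $\omega$-limit set, and invariance forces that set into $\{\dot H=0\}=f^{-1}(0)$. For $J_f\mathcal Y_r$ one can also argue directly: $f(x(t))=\lambda(t)f(x_0)$ with $\lambda$ decreasing, and $|J_f|\ge\delta>0$ on $K$ forces $\lambda\to0$.

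Next we show that $f^{-1}(0)=\{0\}$. The zeros of $f$ are isolated because $f$ is a local diffeomorphism, and those in $K$ are finite. Each zero $p$ is a hyperbolic sink of $J_f\mathcal Y_r$, with linearization $J_f(p)\,I$, and of $-\nabla H$, with linearization $-Df(p)^TDf(p)$. The basins of these finitely many sinks are disjoint, open, and cover $\mathbb R^n$, which is connected. So there is exactly one zero, namely $0$, and the origin is globally asymptotically stable.

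To conclude injectivity for $J_f\mathcal Y_r$, we show that $\mathcal Y_r$ on $\mathbb R^n\setminus\{0\}$ is topologically equivalent to $\mathcal X_r$ and apply Theorem~\ref{Th1} with $U=\mathbb R^n$ and $\mathcal X=\mathcal X_r$. Reversing time does not change the orbits, and $\mathcal Y_r$ flows out of a hyperbolic source at $0$ whose basin is all of $\mathbb R^n$. A small sphere $S$ around $0$ that every orbit crosses exactly once, available by hyperbolicity since $H$ is a Lyapunov function near $0$, gives the equivalence $(s,t)\mapsto\varphi_t(s)$ with $\mathbb R^n\setminus\{0\}\cong S\times\mathbb R$. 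This is a standard construction.

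For $-\nabla H$ we cannot use Theorem~\ref{Th1} directly. Instead, global asymptotic stability of $0$ for $-\nabla H$ with a single critical point of $H$ should give properness of $H$, hence of $f$. We would argue by contradiction: if $H$ is not proper, pick $x_k\to\infty$ with $H(x_k)\le C$. Their orbits must enter a fixed small ball, taking arbitrarily long times or long paths. We then use the gradient-flow identity $H(x_k)-H(\text{exit})=\int\|\nabla H\|^2$ together with a Łojasiewicz-type inequality for the polynomial $H$ at infinity to get a contradiction. A proper local diffeomorphism of $\mathbb R^n$ onto $\mathbb R^n$ is a covering map, and hence injective since $\mathbb R^n$ is simply connected.

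The main obstacle is this last step, deriving properness or injectivity from boundedness of $-\nabla H$. An alternative is to show that the equivalence of $-\nabla H$ and $J_f\mathcal Y_r$ holds for boundedness, via the common Lyapunov function $H$ and the fact that both fields have the same fibers $f^{-1}(\text{ray})$. I would try this alternative first.
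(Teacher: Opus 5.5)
Your argument for the $J_f\cdot\mathcal Y_r$ version is correct, and in two places it is cleaner than the paper's. First, you get uniqueness of the zero because the basins of the hyperbolic sinks are disjoint open sets covering the connected space $\mathbb R^n$. The paper instead argues that infinity is a repeller and applies the index formula of Theorem~\ref{Th index}. Second, for (iii)$\Rightarrow$(ii) you use Bialynicki-Birula--Rosenlicht to get $f(\mathbb R^n)=\mathbb R^n$ and hence a proper Lyapunov function. The paper relies on exactly this tacitly, both when it appeals to Theorem~\ref{Th1} and when it passes without comment from $J_f\mathcal Y_r$ to $-\nabla H$ in (iii)$\Rightarrow$(i). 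For $-\nabla H$ you also correctly obtain (iii)$\Rightarrow$(ii)$\Rightarrow$(i)$\Rightarrow$(ii).

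The genuine gap is the remaining step, from boundedness (or global asymptotic stability) of $-\nabla H$ to injectivity of $f$, which you leave open.

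\begin{itemize}
\item Your fallback rests on a false premise. Along $-\nabla H$ one has $\frac{d}{dt}f(x(t))=-Df\,Df^{T}f$, which is in general not parallel to $f$. So the sets $f^{-1}(\text{ray})$ are invariant only for $J_f\mathcal Y_r$. A common Lyapunov function does not transfer boundedness from one flow to the other.
\item The properness sketch is not yet an argument. The energy identity allows arbitrarily long trajectories along which $\|\nabla H\|$ is small, and you have no inequality at infinity that rules this out.
\item Any proof here must use polynomiality in an essential way. The field $-\nabla H=-\nabla\bigl(\tfrac12\|f\|^2\bigr)$ sees only the scalar function $\|f\|$. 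For smooth local diffeomorphisms of $\mathbb R^2$ one can twist the argument of $f$ on $\{\|f\|>1\}$ while keeping $\|f\|$ fixed. This produces examples with a single zero where $-\nabla H$ is still globally asymptotically stable, yet $f$ wraps some level curve of $\|f\|$ more than once and so is not injective.
\end{itemize}

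The paper does not supply the missing idea either. It argues that global asymptotic stability of $-\nabla H$ makes $0$ the unique singularity of $-\nabla H$, hence of $J_f\mathcal Y_r$. It then concludes that $0$ is globally asymptotically stable for $J_f\mathcal Y_r$. That inference needs boundedness of $J_f\mathcal Y_r$, which is precisely what the paper's own (i)$\Rightarrow$(ii) argument uses. A unique hyperbolic sink need not attract every orbit. The inference uses nothing specific to polynomials, and any non-injective $f$ with $f^{-1}(0)=\{0\}$ defeats it; the twisted smooth example above is such an $f$. So this step needs a genuinely new argument, not a repair along the paper's lines.
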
  

	Theorem \ref{Th2} is an $n$-dimensional generalization of Theorem \ref{Th-L-M1}.
	However, the proof here is based on the properties of limit sets of flows and index theory of $n$-dimensional differential equations without the special structures or properties in the planar differential equations. Moreover, the proof is based on the qualitative theory of differential systems rather than on purely algebraic or functional analytic proofs. However, the proof of \cite[Theorem 2]{LlibreMello} relies on the correctness of the Markus--Yamabe conjecture in dimension $2$, which is false in dimension $n>2$; thus we cannot extend the proof of \cite[Theorem 2]{LlibreMello}. We refer the reader to \cite{LlibreMello,MarkusYamabe} for this conjecture.
		
		Finally, we derive from Theorem \ref{Th2} an explicit sufficient condition for the $n$-dimensional real Jacobian conjecture. Indeed, we give a sufficient condition such that the polynomial vector field $J_f\cdot\mathcal{Y}_r$ is bounded and then by Theorem \ref{Th2} the map $f$ is injective.
    Before the statement of Theorem \ref{Th3}, we need the following definitions. Let
\[
\mathbf w=(w_1,\ldots,w_n)\in \mathbb N^n
\]
be a weight vector. For a monomial
$x^\alpha=x_1^{\alpha_1}\cdots x_n^{\alpha_n}$, we define its
weighted degree with respect to $\mathbf w$ by
\[
\deg_{\mathbf w} x^\alpha=w_1\alpha_1+\cdots+w_n\alpha_n.
\]
We define the weighted degree of a polynomial $p$ with respect to $\mathbf{w}$, denoted $\deg_{\mathbf{w}} p$, as the maximum of the weighted degrees of its monomials. Moreover, a polynomial $p$ is called $\mathbf{w}$-quasi-homogeneous if all its monomials have the same weighted degree with respect to $\mathbf{w}$. Equivalently, there exists an integer $d$, the weighted degree, such that
\[
p(t^{w_1}x_1,\dots,t^{w_n}x_n)=t^d p(x),\qquad t>0.
\]
    The following is our third main result.    
	\begin{mytheorem}
		\label{Th3}
		Let $f:\mathbb R^n\to\mathbb R^n$ be a polynomial map such that
	$J_f(x)<0$ for all $x\in\mathbb R^n$, and  $f(0)=0$.  Let 		$\mathbf w\in\mathbb N^n$
		be a weight vector. Suppose that
		\[
		\deg_{\mathbf w}f_1=\cdots=\deg_{\mathbf w}f_n=m,
		\]
			and let $F=(F_1,\ldots,F_n)$
			be the $\mathbf w$-quasi-homogeneous part of $f$, where each $F_i$ has weighted degree $m$. If
		\[
		J_F(x)<0,
		\qquad x\in\mathbb R^n\setminus\{0\},
		\]
		then $f$ is injective.
	\end{mytheorem}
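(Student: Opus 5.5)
By Theorem \ref{Th2}, it suffices to show that the polynomial vector field $-\nabla H=-Df(x)^Tf(x)$ is bounded. Equivalently, $J_f\cdot\mathcal Y_r$ is bounded. I will prove this by showing that $H=\frac12\|f\|^2$ is proper on $\mathbb R^n$. Along orbits of $-\nabla H$ we have $\dot H=-\|\nabla H\|^2\le 0$, so every positive orbit stays in the sublevel set $\{H\le H(x_0)\}$. If $H$ is proper, this set is compact, which gives boundedness. So the whole proof reduces to properness of $H$, i.e.\ $\|f(x)\|\to\infty$ as $\|x\|\to\infty$.

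\textbf{Step 1: $F$ has no nontrivial zeros.} Differentiating $F(t^{w_1}x_1,\dots,t^{w_n}x_n)=t^mF(x)$ at $t=1$ gives the Euler relation
\[
DF(x)\,(w_1x_1,\dots,w_nx_n)^T=m\,F(x).
\]
If $F(x)=0$ with $x\neq0$, the nonzero vector $(w_ix_i)_i$ would lie in the kernel of $DF(x)$. This contradicts $J_F(x)\neq0$. Hence $F(x)\ne0$ on $\mathbb R^n\setminus\{0\}$. I also need $m\ge1$; this holds since $f(0)=0$ and $f$ is nonconstant.

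\textbf{Step 2: quasi-homogeneous properness.} I introduce the quasi-norm $\rho(x)$, defined as the unique $\rho>0$ with $\sum_i (x_i/\rho^{w_i})^2=1$. Write $x=\rho\ast\theta:=(\rho^{w_1}\theta_1,\dots,\rho^{w_n}\theta_n)$ with $\theta\in S^{n-1}$. Then $\rho\to\infty$ exactly when $\|x\|\to\infty$. Since every monomial of $f_i-F_i$ has weighted degree at most $m-1$,
\[
f(\rho\ast\theta)=\rho^m\bigl(F(\theta)+R(\rho,\theta)\bigr),\qquad |R(\rho,\theta)|\le C\rho^{-1}
\]
uniformly in $\theta\in S^{n-1}$ for $\rho\ge1$. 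By Step 1 and compactness, $c:=\min_{S^{n-1}}\|F\|>0$. Hence $\|f(x)\|\ge \rho^m(c-C\rho^{-1})\to\infty$, so $H$ is proper.

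\textbf{Step 3: conclude.} Combining Steps 1 and 2, $-\nabla H$ is bounded, and Theorem \ref{Th2} (i)$\Rightarrow$(iii) gives injectivity of $f$. The hypothesis $J_f<0$ is needed only to invoke Theorem \ref{Th2}.

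The only potentially delicate point is the uniform remainder estimate in Step 2. It is routine once the weighted spherical coordinates are set up.

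Note that the argument uses only $J_F\neq0$ off the origin through Step 1, so the sign condition on $J_F$ is stronger than needed. Alternatively, the paper likely intends a weighted Poincaré compactification argument applied directly to $J_f\cdot\mathcal Y_r$. The properness route avoids that machinery, and I would present it as above.
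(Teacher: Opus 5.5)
Your proof is correct, but it follows a different route from the paper. The paper uses the same weighted Euler identity $DF(x)Wx=mF(x)$, $W=\operatorname{diag}(w_1,\ldots,w_n)$, to compute the top $\mathbf w$-quasi-homogeneous part of $\mathscr Y=J_f\mathcal Y_r$, namely $\frac1m J_F(x)\,Wx$. It then applies the weighted Poincar\'e compactification (Lemma~\ref{lemma poincare}): in every chart $U_k,V_k$ the hyperplane $z_n=0$ is filled with singular points with transverse eigenvalue $-\frac1m J_F>0$. From this it concludes that weighted infinity is repelling, hence $\mathscr Y$ is bounded, and invokes Theorem~\ref{Th2}.

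You instead take from the Euler identity only that $F^{-1}(0)=\{0\}$. You deduce properness of $f$ by weighted polar coordinates, and boundedness then follows directly from the compact sublevel sets of $H$. This works equally for $J_f\mathcal Y_r$, since $\dot H=2J_fH\le0$ along it.

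What your route buys: it is more elementary, and it sidesteps a step the paper only sketches. That step is passing from a positive transverse eigenvalue at each boundary point to global boundedness, which needs a uniformity argument over the compact weighted sphere. Your route also shows that the hypothesis on $F$ can be weakened to $F^{-1}(0)=\{0\}$. For example, $F=(x_1^3,-x_2)$ with $\mathbf w=(1,3)$ has $J_F=-3x_1^2$, which vanishes on $x_1=0$, yet $F^{-1}(0)=\{0\}$. In fact, once $f$ is proper you do not need Theorem~\ref{Th2} at all: a proper local diffeomorphism of $\mathbb R^n$ is a covering map, hence a global diffeomorphism by Hadamard's theorem. So $f(0)=0$ also becomes superfluous. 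The paper's route, in turn, fits its dynamical program, since it exhibits the pullback radial field at weighted infinity as the weighted radial field $\frac1mJ_FWx$.

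One small correction to your closing remark. Since $J_f(\rho\ast\theta)=\rho^{D}J_F(\theta)+O(\rho^{D-1})$ with $D=nm-\sum_j w_j$, the hypothesis $J_f<0$ forces $J_F\le0$. So ``$J_F\neq0$ off the origin'' is already equivalent to ``$J_F<0$ off the origin''. The genuine weakening is to $F^{-1}(0)=\{0\}$.
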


Indeed, Theorem \ref{Th3} can be seen as a generalization of Theorem \ref{Th-L-M2} in some aspects, although the forms of conditions differ slightly. Here we give a sufficient condition in arbitrary dimension and use the weighted Poincar\'e compactification instead of the usual compactification.
Unlike some previous methods in higher dimensions (see, for example, \cite{CimaGasullManosas96,Liu}), we analyze directly the
		polynomial vector field $J_f\cdot \mathcal Y_r$
		by the weighted Poincar\'e compactification in $\mathbb R^n$.  The highest
	$\mathbf w$-quasi-homogeneous part of $J_f\mathcal Y_r$ is controlled by $J_F$, and the condition
	$J_F<0$ makes the weighted boundary at infinity repelling. It follows that all positive orbits of
	$J_f\cdot \mathcal Y_r$ are bounded.  Theorem \ref{Th2} then yields the injectivity of
	$f$. In this way, Theorem \ref{Th3} provides a sufficient dynamical condition for the real
	Jacobian conjecture.
	
The paper is organized as follows. In Section 2 we collect some preliminary
results on pullback vector fields, the  index of isolated singularities, and the weighted
Poincar\'e compactification of polynomial vector fields in $\mathbb{R}^n$. In
Section 3 we prove the main results. We first establish the higher-dimensional
radial criterion of Theorem \ref{Th BL} for global injectivity and then apply it to
polynomial maps with nonvanishing Jacobian determinant to obtain two necessary
and sufficient dynamical conditions for injectivity. We also prove an explicit
sufficient condition in terms of the highest quasi-homogeneous part of the polynomial
map by analyzing the compactified vector field at weighted infinity. Finally, in
Section 4 we discuss the possible extension of the strip case to higher dimensions and point out that the additional connectedness assumption is needed.

	\bigskip
	\section{Preliminaries}
\setcounter{equation}{0}

	\subsection{Some properties of pullback vector fields}
	The next result is the higher-dimensional version of \cite[Lemma 9]{Braun}. For completeness, we provide a brief proof below.
	\begin{lemma}
		\label{lem:pullback-basic}
		Let $U,V\subset\mathbb{R}^n$ be open sets. Let
		$f:U\to\mathbb{R}^n$ be a $\mathcal{C}^2$ local diffeomorphism with $f(U)\subset V$ and $\mathcal{X}:V\to\mathbb{R}^n$ be a $\mathcal{C}^1$ vector field.  Then:		
		\begin{enumerate}[leftmargin=*]
			\item[\rm(a)] A point $x\in U$ is a singularity of $\mathcal{Y}_{\mathcal{X}}$ defined by \eqref{pullback}
			if and only if $f(x)$ is a singularity of $\mathcal{X}$.		
			\item[\rm(b)]  If $\gamma$ is an orbit of $\mathcal{Y}_{\mathcal{X}}$, then
			\begin{equation}\label{eq:orbit-image}
				(f\circ\gamma)'(t)=\mathcal{X}(f(\gamma(t)))
			\end{equation}
			for every $t$ in the maximal interval of definition of $\gamma$. In particular,
			$f\circ\gamma$ is contained in an orbit of $\mathcal{X}$.			
			
			\item[\rm(c)] If $G:V\to\mathbb{R}$ is a first integral of $\mathcal{X}$, then
			$H=G\circ f$ is a first integral of $\mathcal{Y}_{\mathcal{X}}$.
		\end{enumerate}
	\end{lemma}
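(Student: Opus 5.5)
The three parts follow from the single identity $Df(x)\,\mathcal Y_{\mathcal X}(x)=\mathcal X(f(x))$, which holds for every $x\in U$ by the definition \eqref{pullback}. Since $f$ is a local diffeomorphism, $Df(x)$ is invertible at every point of $U$. Moreover, $f\in\mathcal C^2$ gives $Df^{-1}\in\mathcal C^1$ (the entries of the inverse are rational functions of the first derivatives with nonvanishing denominator $J_f$), and $\mathcal X\circ f\in\mathcal C^1$. Hence $\mathcal Y_{\mathcal X}$ is a $\mathcal C^1$ vector field on $U$, its orbits exist and are unique, and the statements in (b) and (c) make sense.

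For (a), I would use the invertibility of $Df(x)$: $\mathcal Y_{\mathcal X}(x)=0$ if and only if $Df(x)^{-1}\mathcal X(f(x))=0$, and this holds if and only if $\mathcal X(f(x))=0$. So $x$ is a singularity of $\mathcal Y_{\mathcal X}$ exactly when $f(x)$ is a singularity of $\mathcal X$.

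For (b), let $\gamma$ be a solution of $\gamma'=\mathcal Y_{\mathcal X}(\gamma)$ on its maximal interval. The chain rule gives
\[
(f\circ\gamma)'(t)=Df(\gamma(t))\gamma'(t)=Df(\gamma(t))Df(\gamma(t))^{-1}\mathcal X(f(\gamma(t)))=\mathcal X(f(\gamma(t))),
\]
which is \eqref{eq:orbit-image}. Thus $f\circ\gamma$ is a solution of $\mathcal X$ on that interval. By uniqueness of solutions of the $\mathcal C^1$ field $\mathcal X$, it is a restriction of the maximal solution through $f(\gamma(0))$, so its image lies in a single orbit of $\mathcal X$. The image may be only part of that orbit, because the interval of $\gamma$ need not be maximal for $\mathcal X$.

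For (c), I take a first integral to be a $\mathcal C^1$ function that is constant along orbits. If $G$ is constant on orbits of $\mathcal X$, then by (b) the function $H\circ\gamma=G\circ(f\circ\gamma)$ is constant for every orbit $\gamma$ of $\mathcal Y_{\mathcal X}$. Equivalently, $\nabla H(x)\cdot\mathcal Y_{\mathcal X}(x)=\nabla G(f(x))\cdot Df(x)\mathcal Y_{\mathcal X}(x)=\nabla G(f(x))\cdot\mathcal X(f(x))=0$. If the definition also requires a first integral to be nonconstant on open sets, this is preserved: $f$ is a local diffeomorphism and hence an open map, so $G\circ f$ constant on an open set would force $G$ constant on an open set.

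None of the steps is a real obstacle. The only point needing care is the regularity in (b), namely that $\mathcal Y_{\mathcal X}$ is $\mathcal C^1$ so that its orbits are well defined. This is exactly why $f$ is assumed to be $\mathcal C^2$ rather than merely $\mathcal C^1$.
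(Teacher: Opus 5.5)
Your proof is correct and follows the paper's argument: invertibility of $Df(x)$ for (a), the chain-rule identity $(f\circ\gamma)'=Df(\gamma)\,\mathcal Y_{\mathcal X}(\gamma)=\mathcal X(f\circ\gamma)$ for (b), and constancy of $G$ on the image orbit for (c). Your check that $\mathcal Y_{\mathcal X}$ is $\mathcal C^1$ is a welcome addition the paper leaves implicit, and your orbit-based argument for (c) needs no differentiability of $G$, which matters because the paper later applies (c) to the merely continuous first integrals $T_2,\dots,T_n$ in Section~4.
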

	
	\begin{proof}
		Statement (a) follows immediately from \eqref{pullback}, since $Df(x)$ is
		invertible for every $x\in U$.	
		To prove (b), let $\gamma$ be an orbit of $\mathcal{Y}_{\mathcal{X}}$. Then
		\[
		(f\circ\gamma)'(t)
		=
		Df(\gamma(t))\gamma'(t)
		=
		Df(\gamma(t))\mathcal{Y}_{\mathcal{X}}(\gamma(t))
		=
		\mathcal{X}(f(\gamma(t))),
		\]
		which is exactly \eqref{eq:orbit-image}. Hence $f\circ\gamma$ is an integral
		curve of $\mathcal{X}$, and therefore it is contained in an orbit of
		$\mathcal{X}$.		
		Finally for statement (c), let $G$ be a first integral of $\mathcal{X}$ and set $H=G\circ f$.
		By \eqref{eq:orbit-image}, the map $f$ sends each orbit of
		$\mathcal{Y}_{\mathcal{X}}$ into an orbit of $\mathcal{X}$. Since $G$ is
		constant on the latter, it follows that $H$ is constant on the former, and
		therefore $H$ is a first integral of $\mathcal{Y}_{\mathcal{X}}$.
	\end{proof}

\subsection{Index of singularity of polynomial vector field}
Let $q\in\mathbb{R}^n$ be an isolated singularity of a vector field $\mathscr X$, and let $U_q$ be a neighborhood of $q$ such that $\mathscr X(x)\neq0$ for all $x\in U_q\setminus\{q\}$. Define the mapping
\[
\varphi:\partial U_q\longrightarrow\mathbb{S}^{n-1},
\qquad
x\longmapsto\frac{\mathscr X(x)}{\|\mathscr X(x)\|}.
\]
The index of the vector field $\mathscr X$ at $q$, denoted by $i_{\mathscr X}(q)$, is defined as the Brouwer degree of the map $\varphi$; see \cite{Browder,Deimling} for further details. Roughly speaking, the Brouwer degree is an integer-valued invariant associated with a triple $(f,U,q)$, where $U\subset\mathbb{R}^n$ is a bounded open set, $f:\bar U\to\mathbb{R}^n$ is continuous and $q\notin f(\partial U)$. It is denoted by $d_B(f,U,q)$ and is characterized by the four properties: existence, additivity, homotopy invariance and local constancy.
	
	The following result was given in \cite[Theorem 2]{Barreira}.
\begin{theorem}[see \cite{Barreira}]
	\label{Th index}
 Assume that the polynomial vector field $\mathscr{X}$ in $\mathbb{R}^n$ only has finitely many finite singularities $q_1, \ldots, q_l$. If the infinity of $\mathscr{X}$ (i.e. the boundary $\mathbb{S}^{n-1}$ of $\mathbb{D}^n$) is collapsed to a point, which is a repeller, then
 $$
 \sum_{j=1}^l i_\mathscr{X}\left(q_j\right)=(-1)^n .
 $$
\end{theorem}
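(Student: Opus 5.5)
The natural route is through the Poincaré compactification and the Poincaré--Hopf theorem on a sphere. Let $p(\mathscr X)$ be the compactified vector field on the closed disk $\mathbb D^n$, obtained after multiplying by a suitable power of $(1+\|x\|^2)^{1/2}$. Its interior singularities correspond exactly to $q_1,\dots,q_l$, and the local indices are preserved, because the change of coordinates is a diffeomorphism and the time rescaling is by a positive function. Collapsing the boundary $\mathbb S^{n-1}$ to a single point $\infty$ turns $\mathbb D^n/\partial\mathbb D^n$ into the sphere $\mathbb S^n$. This gives a continuous vector field, or more precisely a continuous flow, on $\mathbb S^n$ whose singularities are $q_1,\dots,q_l$ together with $\infty$, all isolated.

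The first step is to invoke the Poincaré--Hopf theorem in its degree-theoretic form, valid for continuous vector fields with finitely many zeros. It gives
\[
\sum_{j=1}^l i_{\mathscr X}(q_j)+i(\infty)=\chi(\mathbb S^n)=1+(-1)^n .
\]
The second step is to show that $i(\infty)=1$. Since $\infty$ is a repeller, reversing time makes it an asymptotically stable equilibrium. By a converse Lyapunov theorem (Wilson's smoothing), there is a smooth Lyapunov function $V$ near $\infty$ such that $V(\infty)=0$, $V>0$ elsewhere, and $\langle\nabla V,\mathscr X\rangle>0$ away from $\infty$ in the original time.

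The straight-line homotopy $s\mathscr X+(1-s)\nabla V$ then has no zeros on the boundary of a small sublevel set, since both fields make an acute angle with $\nabla V$. Hence the index of $\mathscr X$ at $\infty$ equals the index of $\nabla V$ at the strict minimum. This index is $1$ by the Krasnosel'skii lemma, since the index of a gradient at an isolated local minimum equals the index of the radial field $x$. Subtracting $i(\infty)=1$ from the Poincaré--Hopf identity gives $\sum_j i_{\mathscr X}(q_j)=(-1)^n$, as claimed.

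I expect the main obstacle to be justifying the analytic setting at $\infty$. The collapsed field is in general only continuous at $\infty$ and not differentiable there, so both the Poincaré--Hopf theorem and the converse Lyapunov theorem must be applied in versions for continuous vector fields with unique solutions near $\infty$, or else directly to the flow. To sidestep this, I would phrase everything through Brouwer degree on the complement of a small neighborhood of $\infty$, using additivity and homotopy invariance, and the Poincaré--Hopf count only for the smooth field away from $\infty$.
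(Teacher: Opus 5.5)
The paper gives no proof of this statement; it quotes it from \cite[Theorem~2]{Barreira}. Your argument is the standard proof of that result and is correct: Poincar\'e--Hopf on the one-point compactification $\mathbb S^n$ gives $\sum_j i_{\mathscr X}(q_j)+i(\infty)=1+(-1)^n$, and $i(\infty)=1$ because a repeller is an asymptotically stable equilibrium of the reversed flow. Your plan for the regularity at $\infty$ also works: in the inversion chart $y=x/\|x\|^2$, multiply the field by a positive function such as $(1+\|x\|^2)^{-d}$ with $d=\deg\mathscr X$, so that it extends continuously with an isolated zero at $y=0$ (orbits, their orientation and the finite indices are unchanged); then compose the Lyapunov function with a sufficiently flat increasing function so that it is $\mathcal C^1$ at $\infty$ with zero gradient there, which lets you apply the straight-line homotopy and the index-one lemma for an isolated minimum directly.
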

	
The following result can be seen in  \cite[p. 37]{Milnor}.

\begin{lemma}[see \cite{Milnor}]
	\label{Lemma index}
	 Let $q \in \mathbb{R}^n$ be a singularity of polynomial vector field $\mathscr{X}$ with $\operatorname{det} D \mathscr{X}(q) \neq 0$. Then the index of $\mathscr{X}$ at singularity $q$ is $i_\mathscr{X}(q)=\operatorname{sign} \operatorname{det} D \mathscr{X}(q)$.
\end{lemma}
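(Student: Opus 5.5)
This statement is quoted from Milnor, so we need only indicate why it holds. The plan is to reduce the index computation to the degree of a linear map.

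Set $A=D\mathscr X(q)$, which is invertible by hypothesis. Taylor's formula for the polynomial (hence smooth) field gives
\[
\mathscr X(x)=A(x-q)+R(x),\qquad \|R(x)\|\le C\|x-q\|^2
\]
near $q$. Choose a small ball $U_q=B_\varepsilon(q)$. Since $\|A(x-q)\|\ge \|A^{-1}\|^{-1}\|x-q\|$, we may take $\varepsilon<\big(C\|A^{-1}\|\big)^{-1}$. Then the straight-line homotopy
\[
\mathscr X_s(x)=A(x-q)+sR(x),\qquad s\in[0,1],
\]
does not vanish on $\partial U_q$. The same estimate shows that $\mathscr X$ has no zeros in $U_q\setminus\{q\}$, so $q$ is isolated.

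By homotopy invariance of the Brouwer degree, the map $x\mapsto \mathscr X(x)/\|\mathscr X(x)\|$ on $\partial U_q$ has the same degree as $x\mapsto A(x-q)/\|A(x-q)\|$. Hence $i_{\mathscr X}(q)$ equals the index of the linear field $A(x-q)$ at $q$.

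It remains to compute the index of a linear isomorphism. The group $GL_n(\mathbb R)$ has exactly two path components, distinguished by $\operatorname{sign}\det A$. We connect $A$ by a path in $GL_n(\mathbb R)$ to $I$ if $\det A>0$, or to $\operatorname{diag}(-1,1,\dots,1)$ if $\det A<0$. The field stays nonvanishing on $\partial U_q$ along this path, so homotopy invariance applies again. The identity induces the identity map of $\mathbb S^{n-1}$, which has degree $1$. The reflection induces a reflection of $\mathbb S^{n-1}$, which has degree $-1$. Therefore $i_{\mathscr X}(q)=\operatorname{sign}\det D\mathscr X(q)$.

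The only nonroutine ingredient is the connectedness of each component of $GL_n(\mathbb R)$, for which we cite the standard Gram--Schmidt retraction onto $O(n)$. The main obstacle is merely choosing $\varepsilon$ so that the homotopy avoids zero on the boundary, which the estimate above handles.
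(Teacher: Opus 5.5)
Your proposal is correct and follows the standard argument behind the citation to Milnor (the paper gives no proof of its own). You homotope $\mathscr X$ on a small sphere around $q$ to its linear part $D\mathscr X(q)(x-q)$ without creating zeros, then use the two path components of $GL_n(\mathbb R)$ to reduce to the identity or a reflection, which have degrees $1$ and $-1$; your explicit choice $\varepsilon<(C\|A^{-1}\|)^{-1}$ correctly handles the only delicate step.
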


\subsection{Weighted Poincar\'e compactification of polynomial vector fields in $\mathbb R^n$}
Recall that the compactification of polynomial vector fields was introduced by
Poincar\'e in \cite{Poincare} for planar vector fields and was later extended
to higher dimensions in \cite{Cima}. We refer the reader to Chapter 5 of
\cite{Dumortier} or to \cite{Cima} for a detailed exposition.

In this subsection we recall a weighted version of the Poincar\'e compactification for
polynomial vector fields in $\mathbb R^n$. For more details, one can refer to \cite{BraunValls,Garcia2006,Perez2024,Valls2024}. We only provide a brief introduction below. Let
\[
\mathbf w=(w_1,\ldots,w_n)\in \mathbb N^n
\]
be a weight vector and let
\begin{equation}
	\label{poly vec}	
	\mathscr X=(P_1,\ldots,P_n)
\end{equation}
be a polynomial vector field in $\mathbb R^n$. 
We say that $\mathscr X$ has weighted degree $d$ with respect to
$\mathbf w$ if
\[
\deg_{\mathbf w} P_j\le d+w_j-1,\qquad j=1,\ldots,n,
\]
and at least one equality holds. The $\mathbf w$-quasi-homogeneous
part of $\mathscr X$ of degree $d$ is denoted by
\[
\mathscr X_d=(P_{1,d},\ldots,P_{n,d}),
\]
where $P_{j,d}$ is the sum of all monomials of $P_j$ of 
weighted degree $d+w_j-1$ with respect to $\mathbf w$ such that
\[
P_{j,d}(\lambda^{w_1}x_1,\ldots,\lambda^{w_n}x_n)
=
\lambda^{d+w_j-1}P_{j,d}(x),
\qquad \lambda>0 .
\]

We now describe the weighted Poincar\'e compactification in local charts.
Fix $k\in\{1,\ldots,n\}$ and put
\[
\sigma(i)=
\begin{cases}
	i, & i<k,\\
	i+1, & i\ge k.
\end{cases}
\]
In the chart $U_k$ we use the local variables $z=(z_1,\ldots,z_{n-1},z_n)$
given by the weighted change of variables
\[
x_k=\frac{1}{z_n^{w_k}},
\qquad
x_{\sigma(i)}=\frac{z_i}{z_n^{w_{\sigma(i)}}},
\qquad i=1,\ldots,n-1.
\]
The hyperplane $z_n=0$ corresponds to the part of
weighted infinity of $\mathscr{X}$. We also set
\[
\tilde z^{(k)}
=
(z_1,\ldots,z_{k-1},1,z_k,\ldots,z_{n-1}).
\]
Define
\[
\widetilde P_j(z)
=
z_n^{d+w_j-1}
P_j
\left(
\frac{z_1}{z_n^{w_1}},\ldots,\frac{z_{k-1}}{z_n^{w_{k-1}}},
\frac{1}{z_n^{w_k}},
\frac{z_k}{z_n^{w_{k+1}}},\ldots,\frac{z_{n-1}}{z_n^{w_n}}
\right).
\]
Then $\widetilde P_j$ is a polynomial in $(z_1,\ldots,z_n)$, and
\[
\widetilde P_j(z_1,\ldots,z_{n-1},0)
=
P_{j,d}(\tilde z^{(k)}).
\]
After the time rescaling $dt=z_n^{d-1}\,d\tau$,
the weighted compactified vector field is given in the chart $U_k$ by
\begin{equation}
	\label{Uk sys}
	\begin{aligned}
		\frac{dz_i}{d\tau}
		&=
		\widetilde P_{\sigma(i)}(z)
		-
		\frac{w_{\sigma(i)}}{w_k}
		z_i\widetilde P_k(z),
		\qquad i=1,\ldots,n-1,\\
		\frac{dz_n}{d\tau}
		&=
		-\frac{1}{w_k}
		z_n\widetilde P_k(z).
	\end{aligned}
\end{equation}
Thus $z_n=0$ is invariant, and the restriction of the compactified vector
field to $z_n=0$ describes the part of  dynamics of $\mathscr X$ at weighted
infinity.

Similarly, in the chart $V_k$, the weighted change of variables is
\[
x_k=-\frac{1}{z_n^{w_k}},
\qquad
x_{\sigma(i)}=\frac{z_i}{z_n^{w_{\sigma(i)}}},
\qquad i=1,\ldots,n-1.
\]
In this case one sets
\[
\hat z^{(k)}
=
(z_1,\ldots,z_{k-1},-1,z_k,\ldots,z_{n-1}),
\]
and defines
\[
\widehat P_j(z)
=
z_n^{d+w_j-1}
P_j
\left(
\frac{z_1}{z_n^{w_1}},\ldots,\frac{z_{k-1}}{z_n^{w_{k-1}}},
-\frac{1}{z_n^{w_k}},
\frac{z_k}{z_n^{w_{k+1}}},\ldots,\frac{z_{n-1}}{z_n^{w_n}}
\right).
\]
Then
\[
\widehat P_j(z_1,\ldots,z_{n-1},0)
=
P_{j,d}(\hat z^{(k)}),
\]
and the compactified vector field in $V_k$ is
\begin{equation}
	\label{Vk sys}
	\begin{aligned}
		\frac{dz_i}{d\tau}
		&=
		\widehat P_{\sigma(i)}(z)
		+
		\frac{w_{\sigma(i)}}{w_k}
		z_i\widehat P_k(z),
		\qquad i=1,\ldots,n-1,\\
		\frac{dz_n}{d\tau}
		&=
		\frac{1}{w_k}
		z_n\widehat P_k(z).
	\end{aligned}
\end{equation}
When $w_1=\cdots=w_n=1$, this construction reduces to the usual
Poincar\'e compactification.

\bigskip
	\section{Proof of main results}
	\setcounter{equation}{0}

	\subsection{Proof of Theorem \ref{Th1}}
	The proof that {\rm(ii)} implies {\rm(i)} is immediate from the definition of pullback, since $f$ itself provides the conjugacy. We only need to prove that {\rm(i)} implies {\rm(ii)}. Let $p\in f(U)$ be the singularity of $\mathcal X$. By Lemma \ref{lem:pullback-basic}(a) there exists a singularity $x_0\in U$ of $\mathcal Y_r$ such that $f(x_0)=p$. Since $\mathcal X$ is radial surrounding a singularity, it has a unique singularity, and by the pullback property the same holds for $\mathcal Y_r$, so that $x_0$ is the unique singularity of $\mathcal Y_r$.

	We first show that distinct nonsingular orbits of $\mathcal Y_r$ are mapped into distinct orbits of $\mathcal X$. Let $\gamma_1$ and $\gamma_2$ be two distinct nonsingular orbits of $\mathcal Y_r$. Since $\mathcal Y_r$ is radial surrounding $x_0$, both $\gamma_1$ and $\gamma_2$ have $x_0$ as their $\alpha$-limit set, and therefore they intersect arbitrarily small neighborhoods of $x_0$. Take a neighborhood $U_{x_0}$ of $x_0$ such that $f|_{U_{x_0}}:U_{x_0}\to f(U_{x_0})$ is a diffeomorphism. Then $\gamma_1\cap U_{x_0}$ and $\gamma_2\cap U_{x_0}$ lie in distinct local orbits of $\mathcal Y_r$ near $x_0$, and since $f|_{U_{x_0}}$ is a local conjugacy between $\mathcal Y_r$ and $\mathcal X$, their images $f(\gamma_1\cap U_{x_0})$ and $f(\gamma_2\cap U_{x_0})$ lie in distinct local orbits of $\mathcal X$ near $p$. As $\mathcal X$ is radial, each of its orbits is uniquely determined by its behavior in arbitrarily small neighborhoods of the singularity $p$, hence $f(\gamma_1)$ and $f(\gamma_2)$ cannot be contained in the same orbit of $\mathcal X$.

	Let $\gamma$ be a nonsingular orbit of $\mathcal Y_r$. Then $\gamma$ is homeomorphic to $\mathbb R$, and by Lemma \ref{lem:pullback-basic}(b), $f(\gamma)$ is contained in a nonsingular orbit of $\mathcal X$, which is also homeomorphic to $\mathbb R$. Again by Lemma \ref{lem:pullback-basic}(b),
	\[
	\frac{d}{dt}f(\gamma(t))=\mathcal X(f(\gamma(t)))\neq0
	\]
	for $t$ in the maximal interval of definition of $\gamma$, which implies that $f(\gamma(t))$ is strictly monotone along the orbit. In particular, $f|_\gamma$ is injective.

	Finally, let $x_1,x_2\in U$ be such that $f(x_1)=f(x_2)$. If $f(x_1)=f(x_2)=p$, then $x_1=x_2=x_0$ since $x_0$ is the unique preimage of $p$. Otherwise, let $\gamma_1$ and $\gamma_2$ be the nonsingular orbits of $\mathcal Y_r$ through $x_1$ and $x_2$. Then $f(\gamma_1)$ and $f(\gamma_2)$ intersect, hence they are contained in the same orbit of $\mathcal X$, which implies $\gamma_1=\gamma_2$ by the first part of the proof. Since $f$ is injective on each orbit, it follows that $x_1=x_2$. Therefore $f$ is injective on $U$, and being a $\mathcal C^2$ local diffeomorphism, it is a global diffeomorphism from $U$ onto $f(U)$.

\bigskip

\subsection{Proof of Theorem \ref{Th2}}
	``{\rm(i)}$\Rightarrow${\rm(ii)}''. We first note that $\nabla H=Df^Tf=0$ if and only if $f(x)=0$ if and only if $\mathcal Y_r=(Df)^{-1}f=0$, because $Df(x)$ is invertible. So the singularities of $J_f\cdot\mathcal Y_r$ and $-\nabla H$ are exactly the zeros of $f$ (i.e., $f^{-1}(0)$). Let $p\in f^{-1}(0)$ be a singularity. We claim that $p$ is a stable node for both vector fields $J_f\cdot\mathcal Y_r$ and $-\nabla H$. On one hand,
	\begin{align}
	D(J_f\mathcal Y_r)(p)
	&=\nabla J_f(p)^T\mathcal Y_r(p)+J_f(p)D\mathcal Y_r(p)\notag\\
	&=J_f(p)D\mathcal Y_r(p)\notag\\
	&=J_f(p)D\bigl((Df)^{-1}f\bigr)(p)\notag\\
	&=J_f(p)\bigl(D(Df)^{-1}(p)f(p)+(Df(p))^{-1}Df(p)\bigr)\notag\\
	&=J_f(p)I,
	\label{JfI}
	\end{align}
	where we use the facts $\mathcal Y_r(p)=f(p)=0$ and $(Df(p))^{-1}Df(p)=I$. Since $J_f(p)<0$, all eigenvalues of $D(J_f\mathcal Y_r)(p)$ are negative, so $p$ is a stable node of $J_f\mathcal Y_r$. On the other hand,
	\begin{align}
	D(-\nabla H)(p)
	&=-D\bigl(Df(p)^Tf(p)\bigr)\notag\\
	&=-\left(\sum_{i=1}^n f_i(p)D^2f_i(p)+Df(p)^TDf(p)\right)\notag\\
	&=-Df(p)^TDf(p),
	\label{gradH}
	\end{align}
	which is negative definite, so all its eigenvalues are strictly negative. Hence $p$ is also a stable node of $-\nabla H$. In particular, every singularity is isolated since $f(x)$ is a polynomial and a local diffeomorphism, so $\{x\in\mathbb R^n:f(x)=0\}$ must be finite. Write
	\[
	\{x\in\mathbb R^n:f(x)=0\}=\{p_1,\ldots,p_\ell\}.
	\]

	We next show that every positive orbit of $J_f\mathcal Y_r$ or $-\nabla H$ converges to one of these singularities. Let $\gamma_t(x_0)$ (resp. $\beta_t(x_0)$) be an orbit of $J_f\mathcal Y_r$ (resp. $-\nabla H$) with initial condition $x_0\in\mathbb R^n$. By assumption, the positive orbits of $J_f\mathcal Y_r$ (or $-\nabla H$) are bounded, hence the $\omega$-limit sets $\omega_1(x_0)$ (resp. $\omega_2(x_0)$) are nonempty, connected and invariant. Now we examine the function $H$ along orbits. Note that
	\begin{align}
	\frac{d}{dt}H(\gamma_t)
	&=\langle\nabla H(\gamma_t),J_f\mathcal Y_r(\gamma_t)\rangle\notag\\
	&=\left\langle Df(\gamma_t)^Tf(\gamma_t),J_f(\gamma_t)Df(\gamma_t)^{-1}f(\gamma_t)\right\rangle\notag\\
	&=J_f(\gamma_t)\|f(\gamma_t)\|^2\notag\\
	&=2J_f(\gamma_t)H(\gamma_t),
	\label{inequ}
	\end{align}
	where we use the facts $\langle A^Tu,v\rangle=\langle u,Av\rangle$ and $Df(\gamma_t)Df(\gamma_t)^{-1}=I$. Since $J_f(x)<0$ for all $x$, we get $\frac{d}{dt}H(\gamma_t)\leq0$, and equality holds if and only if $H(\gamma_t)=0$, or equivalently $f(\gamma_t)=0$. On the other hand, we note that
	\[
	\frac{d}{dt}H(\beta_t)=\langle\nabla H(\beta_t),\dot\beta_t\rangle=-\|\nabla H(\beta_t)\|^2\leq0,
	\]
	and equality holds if and only if $\nabla H(\beta_t)=0$, equivalently $f(\beta_t)=0$. Therefore, in both cases, $H(\gamma_t)$ and $H(\beta_t)$ are nonincreasing and bounded from below by $0$, so the limits
	\[
	c_1:=\lim_{t\to+\infty}H(\gamma_t),\qquad c_2:=\lim_{t\to+\infty}H(\beta_t)
	\]
	exist for $c_1,c_2\in\mathbb R$. Then we have $H\equiv c_1$ on $\omega_1(x_0)$ and $H\equiv c_2$ on $\omega_2(x_0)$. In what follows, we only discuss the vector field $J_f\mathcal Y_r$ and orbits $\gamma_t$; the case of $-\nabla H$ and $\beta_t$ can be treated analogously. For every $y\in\omega_1(x_0)$, invariance of $\omega_1(x_0)$ yields $\gamma_t(y)\in\omega_1(x_0)$ for $t\in\mathbb R$, so $H(\gamma_t(y))\equiv c_1$. Differentiating with respect to $t$, and by \eqref{inequ}, we conclude that $f(\gamma_t(y))=0$. In particular, $f(y)=0$, so
	\[
	\omega_1(x_0)\subset\{p_1,\ldots,p_\ell\}.
	\]
	Since $\omega_1(x_0)$ is connected and $\{p_1,\ldots,p_\ell\}$ is finite, it follows that $\omega_1(x_0)=\{p_i\}$ for some $i\in\{1,\ldots,\ell\}$. Hence every positive orbit $\gamma_t$ converges to one of the singularities.

	We claim that the nonsingular negative orbit of $\gamma_t$ of $J_f\mathcal Y_r$ with initial point $x_0$ tends to infinity as $t\to-\infty$. Otherwise, suppose that the nonsingular negative orbit of $\gamma_t$ is bounded. Then the $\alpha$-limit set $\alpha_1(x_0)$ is nonempty, connected and invariant. By \eqref{inequ} we have $\frac{d}{dt}H(\gamma_t)<0$; together with negative boundedness we obtain
	\[
	\lim_{t\to-\infty}H(\gamma_t)=c\in\mathbb R
	\]
	and so $H(\alpha_1(x_0))=c$. Same as the above process, we take $\tilde y\in\alpha_1(x_0)$ and so $H(\gamma_t(\tilde y))\equiv c$ by the invariance of $\alpha_1(x_0)$. Differentiating with respect to $t$, and by \eqref{inequ}, we obtain that $f(\gamma_t(\tilde y))=0$. Thus
	\[
	\alpha_1(x_0)\subset\{p_1,\ldots,p_\ell\}.
	\]
	However, $p_1,\ldots,p_\ell$ are all stable nodes of $J_f\mathcal Y_r$, which contradicts the definition of $\alpha$-limit set. What we claimed is true, that is every nonsingular negative orbit of $\gamma_t$ of $J_f\mathcal Y_r$ with initial point $x_0$ tends to infinity as $t\to-\infty$. This implies that the infinity of $J_f\mathcal Y_r$ is a repeller. By Lemma \ref{Lemma index}, \eqref{JfI} and \eqref{gradH}, it is easy to see that the index of the singularity $p_i$ ($i=1,\ldots,\ell$) of $J_f\mathcal Y_r$ and $-\nabla H$ is $(-1)^n$. Therefore by Theorem \ref{Th index} we have
	\[
	\sum_{j=1}^\ell i_{J_f\mathcal Y_r}(p_j)=\ell(-1)^n=(-1)^n
	\]
	and
	\[
	\sum_{j=1}^\ell i_{-\nabla H}(p_j)=\ell(-1)^n=(-1)^n.
	\]
	In both cases, we obtain $\ell=1$ and so $J_f\mathcal Y_r$ and $-\nabla H$ only have one singularity.

	Finally, since $f(0)=0$, the origin is a singularity of $J_f\mathcal Y_r$; it must be the unique singularity. Analogously, we can prove that the origin is the unique singularity of $-\nabla H$. As shown above, every positive orbit converges to some singularity, so now every positive orbit converges to the origin. Since the origin is a stable node, the origin is globally asymptotically stable.

	``{\rm(ii)}$\Rightarrow${\rm(iii)}''. If the origin of $J_f\mathcal Y_r$ is globally asymptotically stable, then $J_f\mathcal Y_r$ is topologically equivalent to the canonical radial vector field $\mathcal X_r$ defined in \eqref{canonical-radial}. It follows from Theorem \ref{Th1} that $f$ is injective. If the origin of $-\nabla H$ is globally asymptotically stable, which means that it is the unique singularity. Hence it is also the unique singularity of $J_f\mathcal Y_r$, so we conclude that the origin of $J_f\mathcal Y_r$ is globally asymptotically stable. As shown above, $f$ is injective.

	``{\rm(iii)}$\Rightarrow${\rm(i)}''. It immediately follows from Theorem \ref{Th1} that $J_f\mathcal Y_r$ is topologically equivalent to the canonical radial vector field $\mathcal X_r$. This implies that the vector field $J_f\mathcal Y_r$ is globally asymptotically stable and thus it is bounded. The global asymptotic stability of the origin of $J_f\mathcal Y_r$ also indicates that the origin of $-\nabla H$ is globally asymptotically stable, hence $-\nabla H$ is bounded.

    \bigskip

\subsection{Proof of Theorem \ref{Th3}}
Before proving Theorem \ref{Th3} we need the following result to determine the existence conditions and eigenvalues of singularities of compactified vector fields on $z_n=0$ in charts $U_k$ and $V_k$.
\begin{lemma}
	\label{lemma poincare}
	Let $\mathscr{X}$ be the polynomial vector field \eqref{poly vec} of weighted
	degree $d$ with respect to $\mathbf w$, and let
	$(P_{1,d},\dots,P_{n,d})$ be its $\mathbf w$-quasi-homogeneous part of degree
	$d$. In the chart $U_k$ a point
	$\tilde p=(z_1,\ldots,z_{n-1},0)$, represented by $\tilde z^{(k)}$ at weighted
	infinity, is a singularity of \eqref{Uk sys} if and only if
\[
P_{\sigma(i),d}(\tilde z^{(k)})
-
\frac{w_{\sigma(i)}}{w_k}
z_iP_{k,d}(\tilde z^{(k)})
=0,
\qquad i=1,\ldots,n-1.
\]
The entire hyperplane $z_n=0$ is filled with singularities if and only if these
equations hold for all $(z_1,\ldots,z_{n-1})$. In that case, the Jacobian matrix
at each point of $z_n=0$ has at least $n-1$ zero eigenvalues, and its remaining
eigenvalue is
$\lambda_k^U=-P_{k,d}(\tilde z^{(k)})/w_k$.

Similarly, in the chart $V_k$, a point
$\hat p=(z_1,\ldots,z_{n-1},0)$, represented by $\hat z^{(k)}$, is a singularity
if and only if
\[
P_{\sigma(i),d}(\hat z^{(k)})
+
\frac{w_{\sigma(i)}}{w_k}
z_iP_{k,d}(\hat z^{(k)})
=0,
\qquad i=1,\ldots,n-1.
\]
If these equations hold for all $(z_1,\ldots,z_{n-1})$, then $z_n=0$ is filled
with singularities, the Jacobian matrix has at least $n-1$ zero eigenvalues,
and its remaining eigenvalue is
$\lambda_k^V=P_{k,d}(\hat z^{(k)})/w_k$.
\end{lemma}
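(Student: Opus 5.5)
The proof is a direct computation in the chart $U_k$ using \eqref{Uk sys}; the chart $V_k$ is handled identically from \eqref{Vk sys}, with the signs changed. First we restrict the system to the invariant hyperplane $z_n=0$. By the identity $\widetilde P_j(z_1,\ldots,z_{n-1},0)=P_{j,d}(\tilde z^{(k)})$ recorded before \eqref{Uk sys}, the first $n-1$ components at a point $\tilde p=(z_1,\ldots,z_{n-1},0)$ become
\[
P_{\sigma(i),d}(\tilde z^{(k)})-\frac{w_{\sigma(i)}}{w_k}z_iP_{k,d}(\tilde z^{(k)}),\qquad i=1,\ldots,n-1 .
\]
The last component $-\frac1{w_k}z_n\widetilde P_k(z)$ vanishes identically there. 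So $\tilde p$ is a singularity exactly when these $n-1$ expressions vanish, which proves the first assertion. The statement about the whole hyperplane is immediate: every point of $z_n=0$ is singular if and only if these polynomial identities hold for all $(z_1,\ldots,z_{n-1})$.

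For the eigenvalue statement, assume the hyperplane is filled with singularities and compute the Jacobian matrix of \eqref{Uk sys} at a point of $z_n=0$. Write the first $n-1$ components as $g_i(z)$. Since $g_i(z_1,\ldots,z_{n-1},0)\equiv0$, we get $\partial g_i/\partial z_j=0$ on $z_n=0$ for $j\le n-1$. Likewise the last component $h(z)=-\frac1{w_k}z_n\widetilde P_k(z)$ satisfies $\partial h/\partial z_j=-\frac1{w_k}z_n\partial_{z_j}\widetilde P_k=0$ at $z_n=0$ for $j\le n-1$. Also
\[
\partial h/\partial z_n=-\frac1{w_k}\widetilde P_k(z)-\frac1{w_k}z_n\partial_{z_n}\widetilde P_k=-\frac1{w_k}P_{k,d}(\tilde z^{(k)})
\]
at $z_n=0$. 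Hence the first $n-1$ columns of the Jacobian matrix are zero, and only the last column $(\partial_{z_n}g_1,\ldots,\partial_{z_n}g_{n-1},\lambda_k^U)^T$ survives. Its characteristic polynomial is $\lambda^{n-1}(\lambda-\lambda_k^U)$. This gives at least $n-1$ zero eigenvalues and the remaining eigenvalue $\lambda_k^U=-P_{k,d}(\tilde z^{(k)})/w_k$; the wording ``at least'' covers the case $\lambda_k^U=0$.

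The main point needing care is that $\widetilde P_j$ is a genuine polynomial, so that differentiating it in $z_n$ is legitimate and the factor $z_n$ kills the extra terms. This follows from $\deg_{\mathbf w}P_j\le d+w_j-1$: each monomial $x^\alpha$ contributes $z_n^{d+w_j-1-\deg_{\mathbf w}x^\alpha}$ times a monomial in $z_1,\ldots,z_{n-1}$, with a nonnegative exponent. The $V_k$ case then repeats the argument with $\hat z^{(k)}$ and $\widehat P_j$, giving the plus signs and $\lambda_k^V=P_{k,d}(\hat z^{(k)})/w_k$.
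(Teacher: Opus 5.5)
Your proposal is correct and follows the paper's proof essentially step for step. You restrict \eqref{Uk sys} to $z_n=0$ to read off the singularity conditions, and when the hyperplane consists of singularities you observe that the first $n-1$ columns of the Jacobian vanish while the $(n,n)$ entry is $-P_{k,d}(\tilde z^{(k)})/w_k$; the only differences are cosmetic: you compute the partial derivatives directly where the paper factors the first $n-1$ components as $z_nG_i(z)$, and you also check that $\widetilde P_j$ is a polynomial, which the paper takes from Subsection 2.3.
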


\begin{proof}
	We first work in the chart $U_k$. The compactified system in $U_k$ is \eqref{Uk sys}.
	Restricting this system to $z_n=0$ gives $\frac{dz_n}{d\tau}=0$
	and
	\[
	\frac{dz_i}{d\tau}
	=
	P_{\sigma(i),d}(\tilde z^{(k)})
	-
	\frac{w_{\sigma(i)}}{w_k}
	z_iP_{k,d}(\tilde z^{(k)}),
	\qquad i=1,\ldots,n-1.
	\]
	Therefore $\tilde p$ is a singularity if and only if
	\[
	P_{\sigma(i),d}(\tilde z^{(k)})
	-
	\frac{w_{\sigma(i)}}{w_k}
	z_iP_{k,d}(\tilde z^{(k)})
	=0,
	\qquad i=1,\ldots,n-1.
	\]
	
	Now assume that these equations hold for all
	$(z_1,\ldots,z_{n-1})$. Since the first $n-1$ components
	of the compactified system vanish identically on $z_n=0$, each of them
	has a factor $z_n$. Hence one can write
	\[
	\frac{dz_i}{d\tau}=z_nG_i(z),
	\qquad i=1,\ldots,n-1,
	\]
	for some polynomials $G_i$. On the other hand,
	\[
	\frac{dz_n}{d\tau}
	=
	-
	\frac1{w_k}
	z_n\widetilde P_k(z).
	\]
	Since
	\[
	\widetilde P_k(z_1,\ldots,z_{n-1},0)
	=
	P_{k,d}(\tilde z^{(k)}),
	\]
	we have, near any point
	\[
	\tilde p:=(z_1,\ldots,z_{n-1},0),
	\]
	the expansion
	\[
	\frac{dz_n}{d\tau}
	=
	-
	\frac1{w_k}
	z_nP_{k,d}(\tilde z^{(k)})
	+
	z_n^2G_n(z)
	\]
	for some polynomial $G_n$. Therefore the Jacobian matrix at $\tilde p$
takes the form
	\[
	\begin{pmatrix}
		0 & *\\
		0 & -\dfrac1{w_k}P_{k,d}(\tilde z^{(k)})
	\end{pmatrix}.
	\]
	Thus it has at least $n-1$ zero eigenvalues, and the remaining eigenvalue is
	\[
	\lambda_k^U
	=
	-\frac1{w_k}
	P_{k,d}(\tilde z^{(k)}).
	\]
	
	The proof in the chart $V_k$ is analogous.
\end{proof}

Now we are in the position to prove Theorem \ref{Th3}.

\begin{proof}[Proof of Theorem \ref{Th3}]
	We apply Lemma \ref{lemma poincare} to the polynomial vector field $J_f\mathcal Y_r$. For simplicity we write
	\[
	\mathscr Y:=J_f\mathcal Y_r=J_fDf^{-1}f.
	\]
	We first compute the $\mathbf w$-quasi-homogeneous part of $\mathscr Y$ with highest weighted degree.

	Since each $F_i$ is $\mathbf w$-quasi-homogeneous of weighted degree $m$, the weighted Euler identity gives
	\[
	DF(x)Wx=mF(x),
	\]
	where
	\[
	W=\operatorname{diag}(w_1,\ldots,w_n).
	\]
	By assumption, $J_F(x)<0$ for $x\neq0$, so $DF(x)$ is invertible for every $x\neq0$. Hence
	\[
	DF(x)^{-1}F(x)=\frac1mWx,\qquad x\neq0.
	\]
	Now
	\[
	J_F(x)=\det DF(x)
	\]
	is $\mathbf w$-quasi-homogeneous of weighted degree $nm-\sum_{j=1}^nw_j$. It follows that the $\mathbf w$-quasi-homogeneous part of $\mathscr Y$ with highest weighted degree is
	\[
	\mathscr Y_d(x):=J_F(x)DF(x)^{-1}F(x)=\frac1mJ_F(x)Wx.
	\]
	Writing
	\[
	\mathscr Y_d=(P_{1,d},\ldots,P_{n,d}),
	\]
	we have
	\[
	P_{j,d}(x)=\frac{w_j}{m}J_F(x)x_j,\qquad j=1,\ldots,n.
	\]
	Since $J_F$ has weighted degree $nm-\sum_{j=1}^nw_j$, the $j$-th component $P_{j,d}$ has weighted degree $nm-\sum_{\ell=1}^nw_\ell+w_j$. According to the discussions in Subsection 2.3, this means that the weighted degree of $\mathscr Y$ is $nm-\sum_{\ell=1}^nw_\ell+1$.

	We now study the weighted compactification of $\mathscr Y$ at infinity. In the chart $U_k$, $\tilde z^{(k)}=(z_1,\ldots,z_{k-1},1,z_k,\ldots,z_{n-1})$ and we have
	\[
	P_{\sigma(i),d}(\tilde z^{(k)})=\frac{w_{\sigma(i)}}{m}J_F(\tilde z^{(k)})z_i,
	\]
	and
	\[
	P_{k,d}(\tilde z^{(k)})=\frac{w_k}{m}J_F(\tilde z^{(k)}).
	\]
	Thus
	\begin{align*}
	P_{\sigma(i),d}(\tilde z^{(k)})-
	\frac{w_{\sigma(i)}}{w_k}z_iP_{k,d}(\tilde z^{(k)})
	&=\frac{w_{\sigma(i)}}{m}J_F(\tilde z^{(k)})z_i-
	\frac{w_{\sigma(i)}}{w_k}z_i\cdot\frac{w_k}{m}J_F(\tilde z^{(k)})=0.
	\end{align*}
	By Lemma \ref{lemma poincare} $z_n=0$ is filled with singularities in $U_k$. Moreover, the remaining eigenvalue at such a singularity is
	\[
	\lambda_k^U=-\frac1{w_k}P_{k,d}(\tilde z^{(k)})=-\frac1mJ_F(\tilde z^{(k)}).
	\]
	Since $J_F(x)<0$ for $x\neq0$, we get $\lambda_k^U>0$.

	In the chart $V_k$, $\hat z^{(k)}=(z_1,\ldots,z_{k-1},-1,z_k,\ldots,z_{n-1})$ and we have
	\[
	P_{\sigma(i),d}(\hat z^{(k)})=\frac{w_{\sigma(i)}}{m}J_F(\hat z^{(k)})z_i,
	\]
	and
	\[
	P_{k,d}(\hat z^{(k)})=-\frac{w_k}{m}J_F(\hat z^{(k)}).
	\]
	Hence
	\begin{align*}
	P_{\sigma(i),d}(\hat z^{(k)})+
	\frac{w_{\sigma(i)}}{w_k}z_iP_{k,d}(\hat z^{(k)})
	&=\frac{w_{\sigma(i)}}{m}J_F(\hat z^{(k)})z_i+
	\frac{w_{\sigma(i)}}{w_k}z_i\left(-\frac{w_k}{m}J_F(\hat z^{(k)})\right)=0.
	\end{align*}
	Therefore $z_n=0$ is also filled with singularities in $V_k$. The remaining eigenvalue is
	\[
	\lambda_k^V=\frac1{w_k}P_{k,d}(\hat z^{(k)})=-\frac1mJ_F(\hat z^{(k)}).
	\]
	Again, since $J_F(x)<0$ for $x\neq0$, we have $\lambda_k^V>0$.

	Consequently, in every local chart at weighted infinity, the compactified vector field has a positive transverse eigenvalue along the boundary $z_n=0$. Hence the weighted infinity is repelling from the finite side. Equivalently, every positive orbit of $\mathscr Y=J_f\mathcal Y_r$ is bounded.

	By Theorem B the boundedness of the polynomial vector field $J_f\mathcal Y_r$ implies that $f$ is injective. This completes the proof.
\end{proof}

\begin{remark}
We compare Theorem \ref{Th3} with the plane case in Theorem \ref{Th-L-M2}.
Indeed, when $n=2$, write
	\[
	f=(f_1,f_2):\mathbb R^2\to\mathbb R^2,
	\]
	and let $F_1$ and $F_2$ be the highest homogeneous parts of $f_1$ and $f_2$, respectively, both of degree $m$. Then
	\[
	J_f\mathcal Y_r
	=
	\binom{f_1(f_2)_y-f_2(f_1)_y}{-f_1(f_2)_x+f_2(f_1)_x}.
	\]
	Hence the highest homogeneous parts of its two components are
	\[
	F_1(F_2)_y-F_2(F_1)_y,	\qquad
	-F_1(F_2)_x+F_2(F_1)_x.
	\]
On the other hand, we have
\[
	J_F=(F_1)_x(F_2)_y-(F_1)_y(F_2)_x.
	\]
    By Euler's identities 	$x(F_1)_x+y(F_1)_y=mF_1$ and $x(F_2)_x+y(F_2)_y=mF_2$, we obtain
\begin{equation}
	\label{planar case}
	\bigl(F_1(F_2)_y-F_2(F_1)_y,\,-F_1(F_2)_x+F_2(F_1)_x\bigr)
	= \left(\frac{x}{m}J_F,\frac{y}{m}J_F\right).
\end{equation}
Equation \eqref{planar case} shows that the condition
$J_F(x)<0$ for $x\in\mathbb{R}^n\setminus\{0\}$ in Theorem \ref{Th3} covers that in Theorem \ref{Th-L-M2}.
\end{remark}

\bigskip
\section{Discussion}
\label{S4}
\setcounter{equation}{0}

Note that Theorem \ref{Th BL} contains three planar cases: annular, radial and strip. In this
paper we have extended the radial case to arbitrary dimension. However, the other two cases do not admit such a direct extension.
We first discuss the strip case. For planar systems the canonical strip vector field
$(1,0)$ has one first integral, namely the second coordinate. Thus its orbits
are exactly the connected components of the level sets of this first integral.
In $\mathbb{R}^n$ the canonical strip vector field can be written as $\mathcal{X}_s=(1,0,\ldots,0)$,
which has $n-1$ independent first integrals $x_2,\ldots,x_n$. If $\mathcal{X}$ is
topologically equivalent to $\mathcal{X}_s$ by a homeomorphism
$T=(T_1,\ldots,T_n)$, then $T_2,\ldots,T_n$ are continuous first integrals of
$\mathcal{X}$. Therefore by Lemma \ref{lem:pullback-basic} (c), for the pullback vector field $\mathcal{Y}_\mathcal{X}$, the functions
\[
T_2\circ f,\ldots,T_n\circ f
\]
are first integrals. Nevertheless, in dimension $n\geq 3$, the common level set
of these $n-1$ first integrals may have several connected components. Hence a
level set need not be a single orbit of $\mathcal{Y}_\mathcal{X}$. This is the main obstruction to
a direct higher-dimensional version of the strip case. To recover the planar result, one has to impose an additional connectedness assumption on the
common level sets.

\begin{example}
 Consider the local
	diffeomorphism
	\[
	f:\mathbb{R}^3\rightarrow \mathbb{R}\times
	(\mathbb{R}^2\setminus\{0\}),\qquad
	f(x,y,z)=\bigl(x,e^y\cos z,e^y\sin z\bigr).
	\]
	Its Jacobian determinant is
	\[
	J_f(x,y,z)=e^{2y}\neq 0.
	\]
	Take the canonical strip vector field $\mathcal{X}_s=(1,0,0)$ on the image of $f$. Since
	\[
	Df(x,y,z)(1,0,0)^T=(1,0,0)^T,
	\]
	the pullback of $\mathcal{X}_s$ by $f$ is 
	\[
	\mathcal{Y}_s:=Df^{-1}\mathcal{X}_s=(1,0,0).
	\]
	Thus the pullback vector field $\mathcal{Y}_s$ has the same coordinate
	expression as $\mathcal{X}_s$. However,
	$f$ is not injective, because
	\[
	f(x,y,z)=f(x,y,z+2\pi)
	\]
	for all $(x,y,z)\in\mathbb{R}^3$. The common level sets of the first integrals
	\[
	f_2=e^y\cos z,\qquad f_3=e^y\sin z
	\]
	are not connected. Indeed, if $c=(c_2,c_3)\neq(0,0)$ and
	$c_2=e^{y_0}\cos z_0$, $c_3=e^{y_0}\sin z_0$, then
	\[
	\{(x,y,z): f_2(x,y,z)=c_2,\ f_3(x,y,z)=c_3\}
	=
	\bigcup_{k\in\mathbb{Z}}
	\{(x,y_0,z_0+2k\pi):x\in\mathbb{R}\}.
	\]
	This is a disjoint union of strip orbits. The example illustrates how
	disconnected common level sets can arise and why a connectedness hypothesis is
	natural. It is not, by itself, a counterexample to Theorem \ref{Th4} without
	the connectedness hypothesis: the ambient spaces $\mathbb R^3$ and
	$\mathbb R\times(\mathbb R^2\setminus\{0\})$ are not homeomorphic, so the two
	strip fields in this example are not topologically equivalent in the sense
	used here.
	\end{example}

The annular case is even more delicate. For planar systems an annular vector field
can be written as $(-x_2,x_1)$ on $\mathbb{R}^2\setminus\{0\}$,
and its orbits are closed curves surrounding the origin. This description
relies essentially on planar topology, especially the Jordan curve theorem:
every simple closed curve separates the plane into a bounded and an unbounded
component. In higher dimensions a closed orbit is still one-dimensional, but
it no longer separates the ambient space. 

Now we extend the strip case of Theorem \ref{Th BL} to $\mathbb{R}^n$.
We denote the {\it canonical strip vector field} in $\mathbb{R}^n$ by
\begin{equation}\label{canonical-strip}
	\mathcal{X}_s(x)=e_1=(1,0,\dots,0),\qquad x\in\mathbb{R}^n.
\end{equation}
A vector field $\mathcal{X}:U\to\mathbb{R}^n$ is said to be a  \textit{strip vector field} if
$\mathcal{X}$  is  topologically equivalent  to
$\mathcal{X}_s$.
The pullback of the canonical strip vector field \eqref{canonical-strip} by $f$ takes the form
\begin{equation}\label{eq:strip-pullback}
	\mathcal{Y}_s(x)=Df(x)^{-1}e_1,\qquad x\in U.
\end{equation}
That is, $\mathcal{Y}_s$ is the  vector field satisfying
\begin{equation}\label{strip-system}
	Df(x)\mathcal{Y}_s(x)=e_1.
\end{equation}
Then \eqref{strip-system} is equivalent to
\begin{equation}
	\label{strip-system-components}
	\langle \nabla f_1(x),\mathcal{Y}_s(x)\rangle =1,\qquad
	\langle \nabla f_j(x),\mathcal{Y}_s(x)\rangle =0,\quad j=2,\dots,n.
\end{equation}
Hence $f_2,\dots,f_n$ are first integrals of $\mathcal{Y}_s$, while $f_1$
is strictly increasing along every orbit of $\mathcal{Y}_s$.
Moreover, by \eqref{strip-system-components} we can also write
\begin{equation*}
	\mathcal{Y}_s = \frac{1}{J_f(x)} \left( \nabla f_2 \times \nabla f_3 \times \cdots \times \nabla f_n \right),
\end{equation*}
where $\times$ denotes the generalized cross product in $\mathbb R^n$, with
the convention
\[
\langle v,\nabla f_2\times\cdots\times\nabla f_n\rangle
=\det(v,\nabla f_2,\ldots,\nabla f_n)
\quad\text{for every }v\in\mathbb R^n.
\]
Note that the canonical strip vector field $\mathcal{X}_s$ given in
\eqref{canonical-strip} has functionally independent first integrals $x_j$ for
$j=2,\ldots,n$. Let $\mathcal{X}$ be a strip vector field, and let
$T=(T_1,\ldots,T_n)$ give a topological equivalence from $\mathcal{X}$ to
$\mathcal{X}_s$. Since $T$ maps orbits of $\mathcal X$ to orbits of
$\mathcal X_s$, the continuous functions
\begin{equation*}
	x_j \circ T=T_j,\quad j=2,\ldots,n
\end{equation*}
are first integrals of $\mathcal X$.
\begin{mytheorem}
	\label{Th4}
	Let $U$ be a simply connected open subset of $\mathbb{R}^n$. Let
	$f=(f_1,\dots,f_n):U\to\mathbb{R}^n$ be a $\mathcal{C}^2$ local
	diffeomorphism and let $\mathcal{X}:f(U)\to\mathbb{R}^n$ be a
	$\mathcal{C}^1$ strip vector field. Let
	$T=(T_1,\ldots,T_n):f(U)\to\mathbb R^n$ be a
	homeomorphism that gives a topological equivalence between $\mathcal X$ and
	$\mathcal X_s$. Moreover, suppose that, for every
	$c=(c_2,\dots,c_n)\in\mathbb{R}^{n-1}$, the set
	\begin{equation}
		\label{fiber F}
		\mathscr{F}_c:=\{x\in U:\ T_2\circ f=c_2,\dots,T_n \circ f=c_n\}
	\end{equation}
	is either empty or connected. Then the
	following two conditions are equivalent:
	\begin{enumerate}[leftmargin=*]
		\item[\rm(i)] The vector field $\mathcal{Y}_\mathcal{X}$ is topologically equivalent to the vector field $\mathcal{X}$.
		\item[\rm(ii)] The map $f$ is a global diffeomorphism onto $f(U)$.
	\end{enumerate}
\end{mytheorem}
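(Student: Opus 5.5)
The direction (ii)$\Rightarrow$(i) is immediate: if $f$ is a global diffeomorphism onto $f(U)$, then $f$ conjugates $\mathcal Y_{\mathcal X}$ to $\mathcal X$, because $Df(x)\mathcal Y_{\mathcal X}(x)=\mathcal X(f(x))$. So the plan concentrates on (i)$\Rightarrow$(ii). The strategy imitates the proof of Theorem \ref{Th1}. First show that $f$ is injective on each orbit of $\mathcal Y_{\mathcal X}$. Then show that two points with the same image lie on the same orbit; here the connectedness of $\mathscr F_c$ replaces the role played in Theorem \ref{Th1} by the singularity.

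\textbf{Step 1: injectivity along orbits.} By (i), $\mathcal Y_{\mathcal X}$ is topologically equivalent to $\mathcal X$, hence to $\mathcal X_s$. It therefore has no singularities, and every orbit is a properly embedded line. By Lemma \ref{lem:pullback-basic}(b), along an orbit $\gamma$ we have $(f\circ\gamma)'(t)=\mathcal X(f(\gamma(t)))$, so $f\circ\gamma$ is an integral curve of $\mathcal X$. I would compose with $T$: the function $T_1\circ f\circ\gamma$ is strictly increasing in $t$, because $T$ maps orbits of $\mathcal X$ onto horizontal lines of $\mathcal X_s$ preserving orientation. Concretely, $f\circ\gamma$ parametrizes part of a single orbit of $\mathcal X$, and that orbit is not closed since it is homeomorphic to a line. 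Hence $f|_\gamma$ is injective.

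\textbf{Step 2: points with equal image lie on one orbit.} Suppose $f(x_1)=f(x_2)$ and set $c=(T_2,\dots,T_n)(f(x_1))$. Then $x_1,x_2\in\mathscr F_c$. By Lemma \ref{lem:pullback-basic}(c), each $T_j\circ f$ with $j\ge2$ is a first integral of $\mathcal Y_{\mathcal X}$, so $\mathscr F_c$ is a union of orbits of $\mathcal Y_{\mathcal X}$. I then want to show that $\mathscr F_c$ consists of a single orbit. The key claim is that each orbit $\gamma\subset\mathscr F_c$ is relatively open in $\mathscr F_c$; given the claim, connectedness of $\mathscr F_c$ gives $\mathscr F_c=\gamma$.

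To prove the claim, take $x\in\gamma$ and a neighborhood $V$ of $x$ on which $f$ is a diffeomorphism. Then $T\circ f|_V$ is a homeomorphism onto an open set, and it sends local orbits of $\mathcal Y_{\mathcal X}$ into horizontal lines. Consequently $\mathscr F_c\cap V$ is the preimage of the single line $\{y_2=c_2,\dots,y_n=c_n\}$ intersected with $T(f(V))$. Shrinking $V$ to a flow box, this preimage is a single local orbit segment, which lies in $\gamma$.

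\textbf{Step 3: conclusion.} From Step 2, $x_1$ and $x_2$ lie on the same orbit, and Step 1 then gives $x_1=x_2$. Thus $f$ is an injective local diffeomorphism, hence a global diffeomorphism onto $f(U)$.

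The main obstacle is the local analysis in Step 2. One must choose the flow box $V$ so that $T(f(V))$ meets the line $\{y_2=c_2,\dots,y_n=c_n\}$ in a connected segment; otherwise $\mathscr F_c\cap V$ could contain several pieces, possibly belonging to different orbits. I would handle this by taking the box to be the preimage under the homeomorphism $T\circ f|_{V}$ of a product neighborhood $I\times B$ around $T(f(x))$, with $I$ an interval and $B$ a ball in $\mathbb R^{n-1}$. The fiber over $c$ is then exactly $I\times\{c\}$, which is connected and so lies in one orbit. Hypothesis (i) is used only to guarantee that orbits are nonclosed lines; the essential input is the connectedness of $\mathscr F_c$.
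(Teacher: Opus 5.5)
Your proposal is correct and follows the paper's proof essentially step for step: first injectivity of $f$ along each orbit from the non-periodicity of strip orbits, then $\mathscr F_c$ being a single orbit because its orbits are relatively open in it and it is connected. Your product neighborhood $(T\circ f|_V)^{-1}(I\times B)$ makes explicit the local step that the paper only asserts. As you half-observe, hypothesis (i) is never genuinely used in either argument, since non-periodicity already follows from $f\circ\gamma$ being an integral curve of the strip field $\mathcal X$; so under the standing hypotheses the connectedness assumption by itself already forces (ii).
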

\begin{proof}
	The implication {\rm(ii)}$\Rightarrow${\rm(i)} is immediate because $f$
	gives the required orbit equivalence. We prove the converse. Let $\gamma_1$
	and $\gamma_2$ be two distinct orbits of
	$\mathcal{Y}_{\mathcal X}$. We claim that $f(\gamma_1)$ and $f(\gamma_2)$ are
	contained in distinct orbits of $\mathcal X$.
	Set
	\[
	H:=(T_2,\dots,T_n)\circ f:U\to\mathbb{R}^{n-1}.
	\]
	By statement (c) of Lemma \ref{lem:pullback-basic}, it follows that $H$
	is constant along the orbits of $\mathcal{Y}_{\mathcal X}$.
	Fix $c\in\mathbb{R}^{n-1}$. Then $\mathscr{F}_c=H^{-1}(c)$ is, by assumption,
	either empty or connected. On the other hand, $\mathscr{F}_c$ is a union of
	orbits of $\mathcal{Y}_{\mathcal X}$, because $H$ is constant on each orbit.
	Moreover, since $T\circ f$ is a local homeomorphism, every point of
	$\mathscr{F}_c$ has a neighborhood in which $\mathscr{F}_c$ coincides with a
	single local orbit of $\mathcal{Y}_{\mathcal X}$. Hence every orbit contained
	in $\mathscr{F}_c$ is open in $\mathscr{F}_c$. These orbits partition the
	fiber, so the connectedness of every nonempty $\mathscr{F}_c$ implies that the
	fiber consists of exactly one orbit.
	
	Now, if $f(\gamma_1)$ and $f(\gamma_2)$ were contained in the same orbit of
	$\mathcal X$, then $T\circ f(\gamma_1)$ and $T\circ f(\gamma_2)$ would be
	contained in the same orbit of $\mathcal X_s$, so that
	\[
	H(\gamma_1)=H(\gamma_2)=c
	\]
	for some $c\in\mathbb{R}^{n-1}$. Therefore
	\[
	\gamma_1\cup\gamma_2\subset \mathscr{F}_c,
	\]
	which is a contradiction. This proves the claim.
	
	Next we prove that $f$ is injective on each orbit of $\mathcal{Y}_{\mathcal X}$.
	Since $\mathcal{Y}_{\mathcal X}$ is topologically equivalent to $\mathcal X$ and
	$\mathcal X$ is strip, all its orbits are homeomorphic to $\mathbb{R}$. By statement (b) of Lemma \ref{lem:pullback-basic}, if $\gamma$ is an orbit of $\mathcal{Y}_{\mathcal X}$, then
	$f(\gamma)$ is contained in an orbit of $\mathcal X$, which is also homeomorphic
	to $\mathbb{R}$. If $\gamma$ is parametrized by flow time, then
	$f\circ\gamma$ is an integral curve of $\mathcal X$. Since strip orbits are
	nonperiodic, uniqueness of solutions shows that the restriction
	\[
	f|_{\gamma}:\gamma\to f(\gamma)
	\]
	is injective.
	
	Let $x_1,x_2\in U$ satisfy $f(x_1)=f(x_2)$. The images of the orbits through
	$x_1$ and $x_2$ then meet and hence lie in the same orbit of $\mathcal X$.
	By the claim, the two points belong to the same orbit of
	$\mathcal Y_{\mathcal X}$; the injectivity of $f$ on that orbit gives
	$x_1=x_2$. Thus $f$ is injective. Since $f$ is a local diffeomorphism, it is a
	global diffeomorphism from $U$ onto $f(U)$.
\end{proof}

\bigskip
\section*{Acknowledgements}
The first and third authors are supported by the National Key R\&D Program of China (No. 2022YFA1005900) and 
the National Natural Science Foundations of China (Nos. 12671199).
The second author is partially supported by the Agencia Estatal de
Investigaci\'on of Spain grant PID2022-136613NB-I00.

\end{document}